\documentclass[12 pt, leqno]{article}
\usepackage{amssymb,amsmath,amsfonts,amsthm}
\usepackage{accents}
\usepackage[none]{hyphenat}

\DeclareMathAlphabet{\chan}{T1}{pzc}{mb}{it}

\newcommand{\op}{\operatorname}

\newcommand{\mc}{\mathcal}
\newcommand{\boldsigma}{\mathbf{\mathop{\pmb{\sum}}}}




\newtheorem{thm}{Theorem}

\newsavebox{\Theoremtype}
\newsavebox{\Theoremlabel}

\newtheoremstyle{ref}
{\topsep}	
{\topsep}	
{\it}
{}
{}
{}
{ }
{\thmname{{\bfseries#1}}\thmnumber{ \bfseries#2\thmnote{\rm #3}\bfseries .}}

\theoremstyle{ref}
\newtheorem{thrm}[thm]{Theorem}
\newtheorem{lem}[thm]{Lemma}
\newtheorem{prop}[thm]{Proposition}
\newtheorem{cor}[thm]{Corollary}

\newtheoremstyle{nnref}
{\topsep}	
{\topsep}	
{\it}
{}
{}
{}
{ }
{\thmname{\textbf{#1}\thmnote{\textrm{ #3}}\textbf{.}}}

\theoremstyle{nnref}
\newtheorem{defn}{Definition}

\begin{document}
\sloppy
\title{$\mathrm{PFA}(S)[S]$ and the Arhangel'ski\u\i-Tall problem}
\author{Franklin D. Tall\makebox[0cm][l]{$^1$}}

\footnotetext[1]{Research supported by grant A-7354 of the Natural Sciences and Engineering Research Council of Canada.\vspace*{1pt}}
\date{\today}
\maketitle

\renewcommand{\thefootnote}{}
\footnote
{\parbox[1.8em]{\linewidth}{$(2010)$ Mathematics Subject Classification. Primary 54A35, 54D15, 54D45; Secondary 03E35, 03E55, 03E57, 03E75.}\vspace*{3pt}}
\renewcommand{\thefootnote}{}
\footnote
{\parbox[1.8em]{\linewidth}{Key words and phrases: locally compact, normal, perfectly normal, metacompact, metalindel\"of, collectionwise Hausdorff, $\op{PFA}(S)[S]$, forcing with a coherent Souslin tree, Axiom $R$, supercompact cardinal.}}

\begin{abstract}
We discuss the Arhangel'ski\u\i-Tall problem and related questions
in models obtained by forcing with a coherent Souslin tree.
\end{abstract}

\section{Introduction}

Around 1965, A.V.~Arhangel'ski\u\i\ proved:
\begin{prop}
Every locally compact, perfectly normal, metacompact space is paracompact.
\end{prop}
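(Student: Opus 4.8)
The plan is to establish the standard structure principle that a locally compact Hausdorff space is paracompact as soon as it is the free topological sum of Lindel\"of (equivalently, $\sigma$-compact) open subspaces, and then to produce such a decomposition. Recall that a regular Lindel\"of space is paracompact and that a free sum of paracompact spaces is paracompact; so once $X$ is written as a disjoint union of clopen Lindel\"of pieces we are finished. First I would fix an open cover of $X$ by relatively compact open sets (available by local compactness) and, invoking metacompactness, pass to a point-finite open refinement $\mathcal{U}=\{U_\alpha:\alpha\in A\}$ with each $\overline{U_\alpha}$ compact.

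The crucial step, and the place where perfect normality must enter, is the claim that every compact subset $K\subseteq X$ meets only countably many members of $\mathcal{U}$. The subspace $K$ is compact and, being a subspace of a perfectly normal space, is itself perfectly normal; hence every open subset of $K$ is an $F_\sigma$, i.e. a countable union of compacta, and therefore Lindel\"of. Thus $K$ is hereditarily Lindel\"of, in particular ccc, and a point-finite family of nonempty open sets in a ccc space is countable. Applying this to the point-finite family $\{U_\alpha\cap K:U_\alpha\cap K\neq\emptyset\}$ gives the claim. Since a $\sigma$-compact set is a countable union of compacta, it too meets only countably many $U_\alpha$.

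With this in hand I would build the decomposition combinatorially. Say $x$ and $y$ are \emph{chain-equivalent} if there are $\alpha_1,\dots,\alpha_k\in A$ with $x\in U_{\alpha_1}$, $y\in U_{\alpha_k}$, and $U_{\alpha_i}\cap U_{\alpha_{i+1}}\neq\emptyset$ for each $i$. Each equivalence class is open, and its complement is a union of other classes and hence open, so the classes are clopen. To see that a class $C$ is $\sigma$-compact, fix $x\in C$, set $R_1=\bigcup\{U_\alpha:x\in U_\alpha\}$ (a finite union, by point-finiteness), and set $R_{n+1}=\bigcup\{U_\alpha:U_\alpha\cap\overline{R_n}\neq\emptyset\}$. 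By the claim each $\overline{R_n}$ is $\sigma$-compact and meets only countably many members of $\mathcal{U}$, so each $R_{n+1}$ is a countable union of relatively compact open sets and is again $\sigma$-compact; hence $C=\bigcup_n R_n$ is $\sigma$-compact, so Lindel\"of. Therefore $X$ is the free sum of the clopen Lindel\"of classes, and paracompactness follows.

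I expect the main obstacle to be precisely the claim of the second paragraph: metacompactness and local compactness by themselves deliver only a \emph{point-finite} cover, and a point-finite cover may have a single compact set meeting uncountably many members, which would wreck the inductive construction of $\sigma$-compact pieces. Perfect normality is exactly the hypothesis that forbids this, through the implications ``compact $+$ perfectly normal $\Rightarrow$ hereditarily Lindel\"of $\Rightarrow$ ccc.'' The remaining bookkeeping --- verifying that the chain-equivalence classes are clopen and that the sets $R_n$ remain $\sigma$-compact --- is routine once the countability of $\{\alpha:U_\alpha\cap K\neq\emptyset\}$ is secured.
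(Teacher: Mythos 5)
The paper does not prove this proposition (it is Arhangel'ski\u\i's cited result), so I am comparing your argument against the standard one. Your architecture is exactly right: cover by relatively compact open sets, refine to a point-finite cover $\mathcal{U}$, show each compact set meets only countably many members of $\mathcal{U}$, and chain together clopen $\sigma$-compact pieces. The derivation ``compact $+$ perfectly normal $\Rightarrow$ hereditarily Lindel\"of'' is also correct, and you have correctly identified that everything hinges on the countability claim of your second paragraph.

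The gap is in how you justify that claim: the lemma ``a point-finite family of nonempty open sets in a ccc space is countable'' is not a theorem. It fails outright without separation axioms (on $[\omega_1]^{<\omega}$ with basic open sets $U_F=\{G: G\supseteq F\}$, any two nonempty open sets meet, so the space is ccc, yet $\{U_{\{\alpha\}}:\alpha<\omega_1\}$ is an uncountable point-finite family of nonempty open sets), and even in the compact Hausdorff setting ccc only gives you, via Dushnik--Miller $\omega_1\to(\omega_1,\omega)^2$, an infinite \emph{pairwise} intersecting subfamily, which is far from the infinite \emph{centered} subfamily you would need to contradict point-finiteness; I know of no ZFC proof from ccc alone. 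By weakening ``hereditarily Lindel\"of'' to ``ccc'' you discard exactly the strength you need. The repair is short: hereditarily Lindel\"of implies countable spread, and in a $T_1$ space of countable spread every point-finite family $\mathcal{V}$ of nonempty open sets is countable. Indeed, stratify $\mathcal{V}$ by $m(V)=\min\{\op{ord}(x,\mathcal{V}):x\in V\}$; for $V$ with $m(V)=n$ choose $x_V\in V$ of order $n$ and put $W_V=\bigcap\{V'\in\mathcal{V}:x_V\in V'\}$. If $x_{V'}\in W_V$ with $m(V')=n$, then $\{V'':x_{V'}\in V''\}$ contains the $n$-element set $\{V'':x_V\in V''\}$ and has size $n$, so the two sets coincide and $V'$ is one of its $n$ members; hence each $W_V$ contains at most $n$ of the chosen points, so (using $T_1$) the chosen points of level $n$ form a discrete, hence countable, set, and the map $V\mapsto x_V$ is at most $n$-to-one on level $n$. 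With this substitution the rest of your argument, including the bookkeeping with the sets $R_n$, goes through.
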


In response to my question to him in Prague in 1971 as to whether this
was true, he responded that he had proved it, but his mentor,
P.~S.~Alexandrov, had not thought it worth publishing!  He
subsequently published it in \cite{Arhangelskii1972}.  Neither of us
could answer the question of what happened if the ``closed sets are
$G_\delta$'s'' requirement was dropped.  I raised this in
\cite{Tall1974} and it became known as the ``Arhangel'ski\u\i-Tall''
problem.  A partial solution was achieved in \cite{Watson1982}, where
S.~Watson proved:

\begin{prop}\label{prop2}
  $V=L$ implies every locally compact normal metalindel\"of space is
  paracompact.
\end{prop}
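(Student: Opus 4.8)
The plan is to realize $X$ as a free topological sum of clopen, $\sigma$-compact (equivalently Lindel\"of) subspaces; by the standard characterization of paracompactness for locally compact Hausdorff spaces, this is equivalent to paracompactness, so producing such a decomposition suffices. The natural candidate is the partition of $X$ into its ``$\sigma$-compact clopen components'': declare $x$ and $y$ equivalent when they lie in a common $\sigma$-compact clopen set. Local compactness guarantees that once a piece is clopen it is $\sigma$-compact, so the entire difficulty is to show that each such class is in fact clopen, and this is where normality, metalindel\"ofness, and $V=L$ must enter.

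First I would isolate the purely topological (ZFC) reduction: in a locally compact, normal, metalindel\"of space the only obstruction to this decomposition closing up is an unseparated closed discrete subspace. Concretely, I would show that such a space is paracompact as soon as it is collectionwise Hausdorff. The mechanism is a transfinite exhaustion: starting from a cover by $\sigma$-compact open sets (available from local compactness) with a point-countable open refinement (from metalindel\"ofness), one builds an increasing $\sigma$-compact open set; the construction fails to terminate in a clopen Lindel\"of piece precisely when it manufactures a closed discrete set $D$ whose points keep escaping. Point-countability is what forces $D$ to be genuinely closed discrete, normality separates $D$ from the piece built so far, and collectionwise Hausdorffness separates the points of $D$ by a discrete open family; feeding this back into the construction forces it to close off. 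Local compactness is used throughout to keep the pieces $\sigma$-compact and to upgrade the separation of points of $D$ to the separation of the discrete family of compact sets that actually needs handling.

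The substantive step is then the set-theoretic one: under $V=L$, a locally compact, normal, metalindel\"of space is collectionwise Hausdorff. For closed discrete subspaces of size $\le\aleph_1$ this is Fleissner's theorem that $V=L$ implies normal spaces of controlled character are collectionwise Hausdorff, applied after using local compactness (compact neighborhoods) to bound the relevant local characters. The hard part will be separating closed discrete subspaces of \emph{arbitrary} cardinality, which forces an induction on $\kappa=\abs{D}$ that at each level invokes the guessing and coherence principles available in $L$ --- $\diamondsuit_\kappa$-style guessing at regular $\kappa$, and $\square_\lambda$ together with GCH (and, in the worst configurations, a morass) to pass through singular $\lambda$, where the separations already built below $\lambda$ must be amalgamated coherently. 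Maintaining normality-driven separations across the singular levels is the main obstacle, and it is exactly here that the full strength of $V=L$, rather than merely $\diamondsuit$ on $\omega_1$, is required.

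Finally I would assemble the pieces: with collectionwise Hausdorffness established, the ZFC reduction of the second paragraph shows that the $\sigma$-compact clopen components partition $X$ into clopen Lindel\"of subspaces, so $X$ is their free union and hence paracompact.
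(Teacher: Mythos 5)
The paper does not actually prove this proposition; it cites Watson \cite{Watson1982}, and the architecture there is the one you propose: show that under $V=L$ a locally compact normal space is collectionwise Hausdorff (indeed with respect to compact sets), then run a ZFC exhaustion argument using metalindel\"ofness to decompose the space into clopen Lindel\"of pieces. Your second and fourth paragraphs are a fair sketch of the topological half, and the paper itself later quotes the sharper form of that half (Gruenhage--Koszmider: locally compact, normal, $\aleph_1$-collectionwise Hausdorff, metalindel\"of spaces are paracompact). That sharper form incidentally means your third paragraph works much harder than necessary: only the $\aleph_1$ case of collectionwise Hausdorffness is needed, so the induction through arbitrary cardinals with $\largediamond_\kappa$, squares and morasses can be dispensed with entirely (and even where higher cardinals are wanted, one cites Fleissner's theorem rather than redoing that machinery inside the topological argument).

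The one genuine gap is in how you invoke Fleissner. Local compactness does \emph{not} bound character --- points of a compact space can have arbitrarily large character (points of $\beta\omega\setminus\omega$, for instance) --- so ``using compact neighborhoods to bound the relevant local characters'' is not a step, and Fleissner's theorem does not apply to the space as given. What is needed is the device the paper calls ``the usual Watson reduction'': for each point $d$ of the closed discrete set, use local compactness and complete regularity to find a compact $G_\delta$ set $Z_d\ni d$ (a zero-set of a Urysohn function inside a compact neighborhood); a compact $G_\delta$ in a locally compact Hausdorff space has a countable outer base, and one reduces to separating the $Z_d$'s in a derived normal space in which they become points of countable character, to which Fleissner's theorem does apply. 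Without this (or some equivalent) reduction the set-theoretic half of your argument does not get off the ground. A smaller quibble: your opening equivalence relation does not do what you claim --- in an uncountable discrete space any two points lie in a common finite clopen set, so there is a single equivalence class, clopen but not $\sigma$-compact; the clopen Lindel\"of decomposition has to come out of the transfinite exhaustion itself, as in your second paragraph, not out of that relation.
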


Then G.~Gruenhage and P.~Koszmider \cite{Gruenhage1996b} proved:
\begin{prop}
  If $ZFC$ is consistent, it is consistent with the existence of a
  locally compact, normal, metacompact space which is not paracompact.
\end{prop}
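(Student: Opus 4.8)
The plan is to force such a space over a suitable ground model, since by Watson's theorem (Proposition~\ref{prop2}) none exists under $V=L$, so some genuine failure of reflection is required. The first and most robust step is a reduction: every paracompact Hausdorff space is collectionwise normal, hence collectionwise Hausdorff, so it suffices to build a locally compact, normal, metacompact space that is \emph{not} collectionwise Hausdorff, i.e.\ one with a closed discrete subspace whose points cannot all be simultaneously separated by a disjoint open family. This pinpoints where the consistency strength lives: under $V=L$ a normal, locally compact space of the kind below is driven by $\diamondsuit$-style reflection to be collectionwise Hausdorff, so the counterexample must inhabit a model in which ``normal $\Rightarrow$ collectionwise Hausdorff'' fails for locally compact spaces. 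It is the \emph{metacompactness} (as against the weaker metalindel\"ofness of Proposition~\ref{prop2}) that makes the example sharp.

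For the underlying space I would use a ladder-system space on a stationary set $S\subseteq\omega_1$ of limit ordinals: successor ordinals are isolated, each $\alpha\in S$ carries a ladder $L_\alpha$ cofinal in $\alpha$ of order type $\omega$, and the basic neighborhoods of $\alpha$ are the sets $\{\alpha\}\cup(L_\alpha\setminus\beta)$ for $\beta<\alpha$. Such a space is zero-dimensional, first countable, and locally compact, since each basic neighborhood is a convergent sequence together with its limit and hence compact, and $S$ is closed discrete. Metacompactness I would secure by taking the ladder system thin enough, with an appropriate point-finiteness discipline on the cutoffs, that the canonical cover admits a point-finite open refinement; arranging genuine metacompactness, rather than mere metalindel\"ofness, is one of the things the choice of ladder system must accomplish.

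The two remaining properties pull against each other, and recognizing this is the heart of the matter. Non-paracompactness, via the reduction, means that the family $\{L_\alpha:\alpha\in S\}$ is \emph{un-separable}: no disjoint open family isolates every point of $S$. Normality, on the other hand, amounts (for these spaces) to a uniformization property for colorings of the ladder system, i.e.\ to the statement that disjoint closed sets \emph{can} be separated. Both are set-theoretically sensitive and opposed: under $\mathrm{PFA}$ or $\mathrm{MA}_{\omega_1}$ every such ladder system is separated, so the space is collectionwise Hausdorff and in fact paracompact, whereas under $\diamondsuit$ un-separable ladder systems exist but the associated colorings fail to uniformize, so normality breaks down. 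Neither ZFC alone nor these standard axioms yields both at once, so the model must be custom-built.

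The hard part, and the main obstacle, is therefore to realize normality and non-collectionwise-Hausdorffness simultaneously. I would begin with a $\diamondsuit$-constructed un-separable ladder system on $S$ --- giving a locally compact, metacompact, non-paracompact, but non-normal space --- and then force normality by a proper iteration of length $\omega_2$ that adds uniformizing functions for the colorings coding the separations demanded by the (at most $2^{\aleph_1}$) pairs of disjoint closed sets, driven by a bookkeeping. The crux is to thread the needle between \emph{normal} and \emph{collectionwise normal}: the iteration must separate arbitrary pairs of disjoint closed sets yet preserve a witness to the un-separability of the whole family $S$, so that collectionwise Hausdorffness --- and with it paracompactness --- is never inadvertently restored. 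Proving that such an iteration exists, remains proper, preserves stationarity of $S$ and the non-separation witness, and leaves local compactness and the point-finite refinement intact, is the delicate core of the construction.
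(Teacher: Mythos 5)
There is a fatal obstruction to your choice of underlying space. A ladder system space on a \emph{stationary} set $S\subseteq\omega_1$ is never metacompact --- in fact it is not even metalindel\"of, so no ``point-finiteness discipline on the cutoffs'' can be arranged. Indeed, let $\mc{V}$ be any open refinement of the cover by the sets $\{\alpha\}\cup L_\alpha$ ($\alpha\in S$) and the singletons of isolated points. For each $\alpha\in S$, the member of $\mc{V}$ containing $\alpha$ must contain a tail $\{\alpha\}\cup(L_\alpha\setminus\beta_\alpha)$, and the map $\alpha\mapsto\min(L_\alpha\setminus\beta_\alpha)$ is regressive on $S$; by the Pressing Down Lemma it is constant, say with value $\delta$, on a stationary $S'\subseteq S$, so $\delta$ lies in uncountably many members of $\mc{V}$. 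Thus $\mc{V}$ is not even point-countable. The same phenomenon shows up from a different angle: a ladder system space is a locally compact Moore space (as the paper itself notes), so once you force it to be normal it is \emph{perfectly} normal, and then Arhangel'ski\u\i's theorem --- the very first proposition of the paper, a ZFC theorem --- would force paracompactness if the space were metacompact. You are trying to build the counterexample inside exactly the class of spaces that Arhangel'ski\u\i's theorem rules out. (Several of your surrounding claims are also off: non-separability of a ladder system on a stationary subset of $\omega_1$ is a ZFC fact by the same pressing-down argument, not something $\op{PFA}$ can negate; and it is metacompactness, not normality, that is unreachable here.)

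Your general strategic reductions are sound --- paracompact implies collectionwise Hausdorff, so it suffices to produce a normal, locally compact, metacompact space with an unseparated closed discrete set, and normality versus non-separation is indeed the tension that the set theory must resolve --- but the Gruenhage--Koszmider construction cited for this proposition realizes them on a quite different space. Roughly, their example is a $\Psi$-like space: an uncountable set $D$ of isolated points together with a closed discrete top level $\{x_\alpha:\alpha<\omega_1\}$, where $x_\alpha$ has neighbourhood base the cofinite subsets of $\{x_\alpha\}\cup B_\alpha$ for a countably infinite $B_\alpha\subseteq D$, and --- this is the decisive design choice --- the family $\{B_\alpha:\alpha<\omega_1\}$ is \emph{point-finite}. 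Point-finiteness makes every basic open cover point-finite, so metacompactness is free; each $\{x_\alpha\}\cup B_\alpha$ is a one-point compactification of a countable discrete set, giving local compactness; and the forcing (with finite conditions, over a suitable ground model) does the genuinely hard work of arranging that complementary subsets of the top level can always be separated while the top level as a whole cannot be. Note that such a family $\{B_\alpha\}$ is not a ladder system converging along $\omega_1$, which is precisely how the construction escapes both the pressing-down obstruction and the perfect-normality trap above. If you want to pursue a proof, the place to start is with a point-finite family of this kind, not with ladders on a stationary set.
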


In 2003, using results announced by S.~Todorcevic (which have now been proved in \cite{LarsonErice} plus \cite{Fischer}), P.~Larson and I
\cite{Larson} proved:
\begin{thm}\label{thm4}
  If the existence of a supercompact cardinal is consistent with
  $ZFC$, so is the assertion that every locally compact, perfectly
  normal space is paracompact.
\end{thm}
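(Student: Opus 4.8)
The plan is to produce the model by a two-step forcing and then verify the topological conclusion inside it. Fix a coherent Souslin tree $S$, and let $\kappa$ be supercompact. First I would run a countable support iteration of length $\kappa$, forcing at each stage with a proper poset that preserves $S$ as a Souslin tree, using a Laver-style anticipating function (available from the supercompactness of $\kappa$) to guarantee that every $S$-preserving proper forcing of size less than $\kappa$ is handled unboundedly often. The resulting model $V[G]$ satisfies $\op{PFA}(S)$. Forcing once more with $S$ gives the final model $V[G][H]$, in which the consequences summarized as $\op{PFA}(S)[S]$ hold. The value of passing through $S$ is that, although $S$ ceases to be Souslin in $V[G][H]$, enough of its combinatorics survives to break potential counterexamples while the reflection strength inherited from $\kappa$ remains intact.

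For the topological heart, let $X$ be locally compact and perfectly normal in $V[G][H]$; the goal is paracompactness. I would use the standard fact that a locally compact Hausdorff space is paracompact exactly when it is a free topological sum of open $\sigma$-compact subspaces. Declaring $x \sim y$ when some open $\sigma$-compact set contains both, one checks that $\sim$ is an equivalence relation with open classes, so the problem reduces to showing that each class is $\sigma$-compact (equivalently Lindel\"of). Perfect normality enters precisely here: it lets me write closed sets as $G_\delta$'s and so manufacture, around any candidate obstruction, a countable family of compact sets exhausting it. This internal $\sigma$-structure is meant to play the role that metacompactness played in Arhangel'ski\u\i's original theorem, which is what permits dropping the metacompactness hypothesis.

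The contribution of $\op{PFA}(S)[S]$ and the supercompact is reflection. I would invoke that in this model locally compact normal spaces are collectionwise Hausdorff, together with a stationary-reflection principle of the Axiom $R$ type, to reduce the general case to spaces of size $\aleph_1$: a minimal non-paracompact $X$ would, by reflection, restrict to a non-paracompact locally compact perfectly normal subspace inside a suitable elementary submodel of size $\aleph_1$. On such a small space a failure of paracompactness amounts to a $\sim$-connected, non-Lindel\"of piece, and there the surviving combinatorics of $S$ apply directly: the piece would carry an $\omega_1$-sized obstruction of ladder-system type, whose coherence is incompatible with what remains of the Souslin tree.

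The hard part will be meshing reflection with the tree combinatorics rather than using either in isolation. Paracompactness is not an obviously reflecting property, so the reflection must be applied to a more stable witness---the failure of the $\sigma$-compact decomposition, or an open cover admitting no point-countable refinement---and perfect normality must then be used to recover genuine non-paracompactness at the reflected $\aleph_1$-sized stage. Carrying out this passage, and simultaneously verifying that $S$ was preserved throughout the iteration so that the $\op{PFA}(S)[S]$ consequences are actually available, is where I expect the real work and the main obstacle to lie.
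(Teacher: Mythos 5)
There is a genuine gap, and it sits exactly where you say you expect ``the real work'' to lie. The paper does not reprove Theorem \ref{thm4} here --- it cites \cite{Larson} --- but it is explicit about the three ingredients that proof needs: (i) $\op{MA}_{\omega_1}(S)[S]$ implies first countable, hereditarily Lindel\"of, regular spaces are hereditarily separable, so that locally compact perfectly normal spaces (being first countable) are locally separable; (ii) locally compact perfectly normal spaces are \emph{fully} collectionwise Hausdorff in the model; and (iii) Balogh's $\boldsigma$, whose proof passes through the statement that compact countably tight spaces are sequential. Your ``topological heart'' replaces all of this with the assertion that perfect normality ``lets me write closed sets as $G_\delta$'s and so manufacture, around any candidate obstruction, a countable family of compact sets exhausting it.'' That is not an argument: a closed $G_\delta$ in a locally compact space is not thereby covered by countably many compacta, and nothing in your sketch produces the Lindel\"ofness of the $\sim$-classes. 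The reduction to clopen $\sigma$-compact pieces is the easy, standard half; the content is showing an $\aleph_1$-sized non-Lindel\"of piece cannot exist, and that is precisely what $\boldsigma$ (applied in the one-point compactification, using local separability and collectionwise Hausdorffness to get the hypotheses of $\boldsigma$ and to turn $\sigma$-discreteness into a contradiction) is for. You never invoke $\boldsigma$ or any surrogate, and the appeal to ``the surviving combinatorics of $S$'' and ``an obstruction of ladder-system type'' names no mechanism.

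A second, smaller gap: you ``invoke that in this model locally compact normal spaces are collectionwise Hausdorff,'' but $\op{PFA}(S)[S]$ by itself only yields $\aleph_1$-collectionwise Hausdorffness, and the paper proves (Theorem 17) that $\op{PFA}(S)[S]$ does \emph{not} imply first countable, locally separable, $\aleph_1$-collectionwise Hausdorff spaces are collectionwise Hausdorff. Getting from $\aleph_1$-cwH to full cwH is exactly why \cite{Larson} performs a preliminary forcing (adding $\lambda^+$ Cohen subsets of each regular $\lambda\ge\kappa$) before the $\op{PFA}(S)$ iteration --- or, as this paper shows, why one must use the standard iteration model so that $\op{PFA}^+(S)[S]$, hence Axiom $R$ or $FRP$, is available. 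Your construction omits the preliminary forcing and does not verify that the Axiom-$R$-type principle you gesture at actually holds in the model you built, so even the collectionwise Hausdorff input is not secured.
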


The question remained as to whether one could obtain the paracompactness of
locally compact normal metacompact spaces as well as the conclusion of Theorem
\ref{thm4} in the same model, i.e.~could we change Arhangel'ski\u\i's
``and'' to an ``or''?  That is what we shall do here, subject to the
same large cardinal assumption as in Theorem \ref{thm4}.  We
conjecture that that assumption can be eliminated.

\begin{thm}\label{thm4a}
If the existence of a supercompact cardinal is consistent with $ZFC$, so
is the assertion that every locally compact normal space that either
is metalindel\"{o}f or has all closed sets $G_\delta$'s is paracompact.
\end{thm}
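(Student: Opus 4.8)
The plan is to produce a single model of the form $\mathrm{PFA}(S)[S]$ and to verify both halves of the disjunction in it. Since a space is perfectly normal exactly when it is normal and all its closed sets are $G_\delta$'s, the theorem asserts that in one model every locally compact normal space that is either metalindel\"of or perfectly normal is paracompact. First I would construct the ground model: starting from a supercompact $\kappa$, build a model of $\mathrm{PFA}(S)$ by the usual Todorcevic-style countable-support iteration of proper posets that preserve a fixed coherent Souslin tree $S$, guided by a Laver-type function on $\kappa$; this is exactly the hypothesis under which Theorem \ref{thm4} is obtained, and it is where the large cardinal is consumed. Then force with $S$ itself to pass to $\mathrm{PFA}(S)[S]$. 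Throughout I will use the standard reduction that a locally compact space is paracompact iff it is the free topological sum of clopen $\sigma$-compact (equivalently Lindel\"of) subspaces, so that ``paracompact'' means ``partitions into clopen Lindel\"of pieces.''

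The perfectly-normal half is essentially Theorem \ref{thm4}. Since the Larson--Tall result is itself established in a $\mathrm{PFA}(S)[S]$ extension, the point is not to reprove it but to check that its conclusion survives in the very model built above; I would verify that the iteration and the final step by $S$ are arranged so that ``every locally compact perfectly normal space is paracompact'' holds here verbatim. This delivers the $G_\delta$ alternative essentially for free, and the entire burden of the theorem is then the metalindel\"of alternative in the same model.

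For the metalindel\"of half I would isolate two consequences of $\mathrm{PFA}(S)[S]$. The first is that locally compact normal spaces are collectionwise Hausdorff --- the $\mathrm{PFA}(S)[S]$ replacement for the $V=L$ collectionwise-Hausdorff input that drives Watson's proof of Proposition \ref{prop2}; I would quote this from the forcing-with-a-coherent-Souslin-tree literature and check its preservation through the $S$-extension. The second is a reflection principle of Axiom $R$ type, derived from the supercompact, strong enough to reflect non-paracompactness of a locally compact space to a subspace of size at most $\aleph_1$. Granting these, the argument splits: for a locally compact normal metalindel\"of space of size $\le \aleph_1$, a point-countable open refinement of the cover by relatively compact open sets, together with collectionwise Hausdorffness to separate the closed discrete sets that appear, lets me assemble the desired partition into clopen Lindel\"of pieces directly, so such small spaces are paracompact; the reflection principle then promotes this to arbitrary locally compact normal metalindel\"of spaces, since a putative non-paracompact example would reflect to a non-paracompact subspace of size $\le \aleph_1$, a contradiction.

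The main obstacle is concentrating the reflection and the collectionwise Hausdorff conclusion in one model and verifying that neither is destroyed by the final forcing with $S$: the coherent Souslin tree is chosen precisely so that the proper posets in the iteration preserve it, but one must confirm that the supercompact-derived reflection and the normal-implies-collectionwise-Hausdorff principle both pass to $\mathrm{PFA}(S)[S]$ and coexist with the structure needed for Theorem \ref{thm4}. A secondary difficulty is that the metalindel\"of hypothesis is weaker than metacompactness or perfect normality, so the size-$\le \aleph_1$ paracompactness argument has less combinatorial structure to exploit and the reflection step must carry correspondingly more weight; coaxing the point-countable refinement into a genuine clopen Lindel\"of decomposition under only normality and collectionwise Hausdorffness is the delicate core of the proof.
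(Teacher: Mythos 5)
Your choice of model (the model of Theorem \ref{thm4}, i.e.\ $\op{PFA}(S)[S]$ over the Larson--Tall ground model) and your treatment of the closed-sets-$G_\delta$ alternative by quoting Theorem \ref{thm4} match the paper exactly. The divergence --- and the gap --- is in the metalindel\"of half. The paper does not reflect non-paracompactness to subspaces of size $\aleph_1$ at all: it combines the theorem that $\op{PFA}(S)[S]$ implies locally compact normal spaces are $\aleph_1$-collectionwise Hausdorff with a ZFC lemma of Gruenhage and Koszmider that locally compact, normal, $\aleph_1$-collectionwise Hausdorff, metalindel\"of spaces are paracompact, with no size restriction and no reflection principle. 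Your plan instead rests on two unproved claims: (i) that an Axiom-$R$-type principle derived from the supercompact reflects non-paracompactness of a locally compact normal metalindel\"of space to a subspace of size at most $\aleph_1$, and (ii) that locally compact normal collectionwise Hausdorff metalindel\"of spaces of size at most $\aleph_1$ are paracompact.

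You yourself flag (ii) as the ``delicate core'' without supplying it; it is essentially the content of the Gruenhage--Koszmider lemma, which you neither prove nor cite, and which in fact needs no cardinality restriction once $\aleph_1$-collectionwise Hausdorffness is available --- so the whole reflection scaffolding is unnecessary. Claim (i) is the more serious problem: paracompactness of locally compact spaces does not reflect from $\aleph_1$-sized subspaces as a matter of course (reflection theorems of this kind require substantial extra hypotheses and their own proofs), and nothing in your proposal indicates how it would be established or preserved through the forcing with $S$. Note also that in the paper reflection (Axiom $R$, $FRP$) enters only later and for a different purpose: upgrading $\aleph_1$-collectionwise Hausdorff to fully collectionwise Hausdorff, which is needed for the perfectly normal case, not the metalindel\"of one. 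So the perfectly normal half of your argument is fine, but the metalindel\"of half as proposed has a genuine hole that the Gruenhage--Koszmider lemma is precisely designed to fill.
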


\section{$\mathrm{PFA}(S)[S]$}

The model that we
use first is the same one as for Theorem \ref{thm4}.  I use the
convention that ``$\op{PFA}(S)[S]$ implies $\Phi$'' stands for the
assertion that in any model constructed by starting with a coherent
Souslin tree $S$, forcing to obtain $\op{PFA}(S)$, i.e.~$\op{PFA}$ restricted to
proper posets preserving $S$, and then forcing with $S$, $\Phi$ holds.
We analogously use ``$\op{MA}_{\omega_1}(S)[S]$''.  For a discussion of
such models and the definition of \textit{coherent}, see
\cite{Larson2001}.  The model of Theorem \ref{thm4} is a model
constructed in that fashion, but over a particular ground model.  We
shall later discuss modifications of that model.

That the model of Theorem \ref{thm4} suffices to prove Theorem \ref{thm4a} follows immediately from:

\begin{thrm}[~\cite{Tall}]
  PFA$(S)[S]$ implies locally compact normal spaces are
  $\aleph_1$-collectionwise Hausdorff.
\end{thrm}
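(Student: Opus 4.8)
The plan is to force a separation by a proper, $S$-preserving poset and then invoke $\op{PFA}(S)$. Write the final model as $W[G]$, where $W \models \op{PFA}(S)$ and $G$ is $S$-generic over $W$, and suppose toward a contradiction that in $W[G]$ some locally compact normal space $X$ carries a closed discrete set $D = \{x_\alpha : \alpha < \omega_1\}$ that admits no expansion to a pairwise disjoint open family. Since a Souslin tree is ccc, $S$ adds no reals and preserves $\omega_1$; thus in $W$ we may fix $S$-names $\dot X$ and $\dot x_\alpha$ and a condition of $S$ forcing that $\dot X$ is locally compact normal, that $\dot D = \{\dot x_\alpha : \alpha < \omega_1\}$ is closed discrete, and that $\dot D$ cannot be separated.

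First I would use local compactness to normalize the picture: in the extension each $x_\alpha$ has an open neighborhood $U_\alpha$ with $\overline{U_\alpha}$ compact and $\overline{U_\alpha} \cap D = \{x_\alpha\}$, so that separating $D$ reduces to shrinking the $U_\alpha$ to pairwise disjoint open $V_\alpha \ni x_\alpha$. Taking $S$-names $\dot U_\alpha$ for these neighborhoods, I then define in $W$ a poset $Q$ of finite approximations to the shrinking: a condition is a pair $(F, \langle \dot V_\alpha : \alpha \in F\rangle)$ with $F \in [\omega_1]^{<\omega}$ and each $\dot V_\alpha$ a nice $S$-name for an open neighborhood of $\dot x_\alpha$ forced to lie inside $\dot U_\alpha$, subject to $S$ forcing $\overline{\dot V_\alpha} \cap \overline{\dot V_\beta} = \emptyset$ for distinct $\alpha,\beta \in F$; normality supplies such refinements at every finite stage. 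A filter meeting the $\aleph_1$ dense sets $\{(F,\cdot) : \alpha \in F\}$ produces $S$-names for a full disjoint expansion of $\dot D$.

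The properness of $Q$ is where local compactness and normality combine. Given a countable elementary submodel $M$ of a large $H(\theta)$ with $\delta = M \cap \omega_1$, and a condition $p \in M$, the set $\{x_\alpha : \alpha \in M\}$ is a subset of the closed discrete $D$, hence closed and missing $x_\delta$; normality separates $x_\delta$ from it, and the compact closures $\overline{U_\alpha}$ keep this separation point-finite near $x_\delta$, so the finitely many commitments of $p$ can be respected. This lets one build a master condition below $p$ forcing every dense subset of $Q$ that lies in $M$ to be met inside $M$, which is exactly $(M,Q)$-genericity.

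The main obstacle is showing that $Q$ preserves the Souslinity of $S$, for only then does $\op{PFA}(S)$ apply to it. Here I would lean on the coherence of $S$: the conditions of $Q$ touch the tree only through the compact sets $\overline{\dot V_\alpha}$, and coherence allows conditions attached to distinct nodes at a common level to be amalgamated, so that a $Q$-name for an uncountable antichain or branch through $S$ would be decided along a club of levels, contradicting the Souslinity of $S$ in $W$. Granting $S$-preservation, $\op{PFA}(S)$ yields in $W$ a filter meeting all the dense sets above, hence $S$-names separating $\dot D$; as $S$ adds no reals, the separation descends to $W[G]$, contradicting our choice of $X$ and $D$ and proving the lemma.
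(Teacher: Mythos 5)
This statement is quoted from \cite{Tall}; the paper at hand gives no proof of it, so your proposal has to be measured against the argument in that reference, and it is not a reconstruction of it --- it is a different strategy, and one with a fatal gap. The crux is your properness claim for the finite-condition separating poset $Q$. Separating $\{x_\delta\}$ from the closed set $\{x_\alpha:\alpha\in M\cap\omega_1\}$ is essentially just regularity and says nothing about the real difficulty: given a condition $q\le p$ whose domain reaches above $\delta=M\cap\omega_1$ and a dense $E\in M$, you must find $r\in E\cap M$ compatible with $q$, and nothing in normality or local compactness prevents the neighborhoods $\dot V_\beta$ ($\beta<\delta$) chosen by $r$ from meeting the neighborhoods $\dot V_\alpha$ ($\alpha\ge\delta$) already committed to by $q$. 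This is exactly the obstruction that makes collectionwise Hausdorffness a ``$V=L$-style'' property rather than a forcing-axiom-style one, and your sketch does not engage with it; the appeal to coherence for $S$-preservation is likewise not an argument. There is also a transfer problem at the end: a $\mc{D}$-generic filter on $Q$ obtained in $W$ yields $S$-names $\dot V_\alpha$, but for their interpretations under the actual generic $G$ to separate the actual $D$ you need genericity with respect to dense subsets of the iteration $S\ast\dot Q$ arranged so that appropriate conditions of $S$ land in $G$; ``$S$ adds no reals'' does not do this.

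More structurally, notice that your argument never uses the final forcing with $S$ for anything except being c.c.c.\ and $\omega_1$-preserving; if it worked, the same argument verbatim would show that $\op{PFA}$ alone implies locally compact normal spaces are $\aleph_1$-collectionwise Hausdorff. That is not how this theorem is obtained, and the entire design of the $\op{PFA}(S)[S]$ machinery is premised on the opposite division of labor: the forcing with the coherent Souslin tree $S$ itself supplies a $\diamondsuit$-like guessing principle (for ground-model-coded names along the generic branch), which is what drives a Fleissner-style inductive separation of the closed discrete set on a club of indices, with the $\op{PFA}(S)$ side of the model used to control the combinatorics of the locally compact case (reducing it, via compact neighborhoods and a Watson-type analysis, to the small-character situation the guessing argument handles). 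In short: the proof in \cite{Tall} gets $\aleph_1$-collectionwise Hausdorffness \emph{from the tree}, not from a proper poset forcing a separation, and your route founders precisely where one would expect --- the separating poset is not proper.
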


\begin{lem}[~\cite{Gruenhage1996}]
  Locally compact normal $\aleph_1$-collectionwise Hausdorff
  metalindel\"of spaces are paracompact.
\end{lem}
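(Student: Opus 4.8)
The plan is to use the classical characterization that a locally compact Hausdorff space is paracompact if and only if it is a free topological sum of clopen Lindel\"of (equivalently $\sigma$-compact) subspaces, and then to produce such a decomposition. First I would use local compactness to pass to an open cover all of whose members have compact closure, and metalindel\"ofness to refine it to a point-countable open cover $\mathcal{W}$, still by relatively compact sets. Declaring $x$ and $y$ equivalent when they are joined by a finite $\mathcal{W}$-chain partitions $X$ into clopen pieces, each of which inherits local compactness, normality, $\aleph_1$-collectionwise Hausdorffness (a closed discrete subset of a clopen piece is closed discrete in $X$) and metalindel\"ofness. Since paracompactness is preserved by free sums of clopen subspaces, it suffices to treat a single $\mathcal{W}$-chain-component; equivalently, I may assume $X$ itself is $\mathcal{W}$-chain-connected and must prove it is Lindel\"of.

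To show Lindel\"ofness, fix $W_0\in\mathcal{W}$ and define inductively $\mathcal{W}_0=\{W_0\}$ and $\mathcal{W}_{n+1}=\mathcal{W}_n\cup\{W\in\mathcal{W}:W\cap\bigcup\mathcal{W}_n\neq\emptyset\}$. Chain-connectedness gives $\bigcup_n\bigcup\mathcal{W}_n=X$, so if every $\mathcal{W}_n$ were countable then $X$ would be a countable union of relatively compact open sets, hence $\sigma$-compact and Lindel\"of, and we would be done. Thus it is enough to prove that each $\mathcal{W}_n$ is countable, and for this it is enough to prove the key claim that \emph{no compact subset of $X$ meets uncountably many members of $\mathcal{W}$}: if $\mathcal{W}_n$ is countable then $\bigcup\mathcal{W}_n$ is $\sigma$-compact, hence covered by countably many compact sets, and by the claim each of these meets only countably many members of $\mathcal{W}$, so $\mathcal{W}_{n+1}$ is countable.

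The heart of the argument, and the step I expect to be the main obstacle, is this key claim, for it is exactly here that normality and $\aleph_1$-collectionwise Hausdorffness become indispensable; point-countability alone does not bound the number of members meeting a compact set. Suppose toward a contradiction that a compact $K$ meets uncountably many $W_\xi\in\mathcal{W}$. Using that each $\overline{W_\xi}$ is compact while, by point-countability, each point of $K$ lies in only countably many members of $\mathcal{W}$, I would thin the family $\{W_\xi\}$ so as to push accumulation off of $K$ and thereby extract an uncountable closed discrete subspace $D\subseteq X$ (necessarily lying outside the compact $K$). Normality supplies the closed-set separations needed to control this construction, and $\aleph_1$-collectionwise Hausdorffness then separates $D$ by a disjoint open family; combined with local compactness this yields a locally finite open refinement of $\mathcal{W}$ around $D$. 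Since the remainder of $X$ is governed by the Lindel\"of stages already built, the result is incompatible with $X$ being a non-Lindel\"of $\mathcal{W}$-chain-component, which is the desired contradiction.

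This establishes the key claim, hence the countability of each $\mathcal{W}_n$, hence the Lindel\"ofness of every chain-component, and finally the decomposition of $X$ as a free sum of clopen Lindel\"of subspaces, giving paracompactness. The only genuinely delicate point is the extraction of the closed discrete set $D$: producing it cleanly from a compact set that meets uncountably many cover members is where the metalindel\"of hypothesis and the relatively compact, point-countable cover must be combined most carefully, and I would expect to spend the bulk of the effort making that thinning argument precise.
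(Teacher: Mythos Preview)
The paper does not prove this lemma at all: it is quoted as a known result from \cite{Gruenhage1996} and simply applied (together with the preceding theorem) to finish the proof of Theorem~\ref{thm4a}. So there is no ``paper's own proof'' to compare against; your write-up is an attempt to reconstruct the Gruenhage--Koszmider argument.

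Your overall framework---pass to a point-countable cover $\mathcal{W}$ by relatively compact open sets, split into $\mathcal{W}$-chain-components, and show each is Lindel\"of by proving $|\mathcal{W}_n|\le\aleph_0$ for all $n$---is the standard one and is fine. The gap is exactly where you say it is, in the ``key claim'' that a compact set can meet only countably many members of $\mathcal{W}$, and your sketch does not close it. Two specific problems: (i) The thinning step ``push accumulation off of $K$ and thereby extract an uncountable closed discrete $D$'' is asserted, not argued; point-countability of $\mathcal{W}$ tells you each point of $K$ lies in only countably many $W_\xi$, but it does not by itself let you choose points in the $W_\xi$'s that form a set closed discrete in $X$ (the $\overline{W_\xi}$'s need not be disjoint, and you cannot yet invoke the claim for them since that is what you are proving). (ii) The final sentence of that paragraph is not a valid contradiction: you have assumed only that some compact $K$ meets uncountably many $W_\xi$, not that the chain-component is non-Lindel\"of, so ``incompatible with $X$ being a non-Lindel\"of $\mathcal{W}$-chain-component'' does not contradict any standing hypothesis. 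Separating $D$ by $\aleph_1$-CWH and getting a locally finite family near $D$ is not, by itself, inconsistent with anything.

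If you want to make this line work, you need an honest mechanism that turns ``$K$ meets $\{W_\xi:\xi<\omega_1\}$'' into a contradiction using normality and $\aleph_1$-CWH. In the Gruenhage--Koszmider argument this is where the real work lies (it is not a one-line thinning), and your proposal, as written, is a description of where the difficulty sits rather than a proof.
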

\qed

Gruenhage and Kozmider proved in \cite{Gruenhage1996} that:
\begin{prop}\label{prop8}
  $\op{MA}_{\omega_1}$ implies every locally compact, normal,
  metalindel\"of space is paracompact.
\end{prop}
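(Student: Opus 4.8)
The plan is to prove Proposition \ref{prop8} in two stages, using the quoted Lemma to supply the final step. Recall that a locally compact Hausdorff space is paracompact exactly when it is the free topological sum of clopen $\sigma$-compact subspaces, and that for metalindel\"of spaces the Lemma already reduces paracompactness to $\aleph_1$-collectionwise Hausdorffness. Accordingly I would first show that $\op{MA}_{\omega_1}$ implies every locally compact normal space $X$ is $\aleph_1$-collectionwise Hausdorff, and then simply invoke the Lemma: a locally compact, normal, metalindel\"of space is then $\aleph_1$-collectionwise Hausdorff and hence paracompact. The metalindel\"of hypothesis is used only through the Lemma; the real work is in the first stage, where normality and local compactness do the job and $\op{MA}_{\omega_1}$ (which in particular forces $2^{\aleph_0}\ge\aleph_2$, so that $\aleph_1<2^{\aleph_0}$) supplies the combinatorics.

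So fix a closed discrete $D=\{x_\alpha:\alpha<\omega_1\}$ in a locally compact normal $X$; the goal is pairwise disjoint open sets about the $x_\alpha$. Using regularity and local compactness, first choose for each $\alpha$ an open $U_\alpha\ni x_\alpha$ whose closure is compact and meets $D$ only in $x_\alpha$. I would then run a finite-approximation forcing: a condition specifies a finite $F\subseteq\omega_1$ and, for each $\alpha\in F$, an open set with $x_\alpha\in W_\alpha\subseteq\overline{W_\alpha}\subseteq U_\alpha$, $\overline{W_\alpha}$ compact, and the family $\{\overline{W_\alpha}:\alpha\in F\}$ pairwise disjoint; one condition extends another by enlarging $F$ and shrinking the already-chosen neighborhoods. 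For each $\beta<\omega_1$ the set of conditions with $\beta$ in their domain is dense: given a condition, the union of its finitely many compact sets $\overline{W_\alpha}$ is compact and misses $x_\beta$ (as $\overline{W_\alpha}\subseteq\overline{U_\alpha}$ meets $D$ only in $x_\alpha$), so regularity and local compactness produce an admissible neighborhood of $x_\beta$ disjoint from it. A filter meeting these $\aleph_1$ dense sets yields the required pairwise disjoint expansion, so by $\op{MA}_{\omega_1}$ it suffices to prove that this poset is ccc.

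The ccc verification is the main obstacle, and it is exactly here that normality is indispensable. Given uncountably many conditions I would thin them, by the $\Delta$-system lemma, so that their domains form a $\Delta$-system with finite root $R$; the difficulty, absent first countability, is that there may be uncountably many distinct neighborhood assignments on $R$, so one cannot simply appeal to a countable pigeonhole to align the root parts. The plan is to use normality to do this: the relevant closed sets built from the $\bigcup_{\alpha\in R}\overline{W_\alpha}$ together with the disjoint tail data can be separated by open sets, and one uses such a separation, together with further shrinking inside the $U_\alpha$, to manufacture a common extension of two suitably chosen conditions. Concretely, after the thinning two conditions are made compatible by shrinking each tail neighborhood to avoid the finitely many compact closures coming from the other condition, the conflict on the root having been resolved once and for all by the normality-based separation. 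Arranging this conflict-resolution so that it genuinely produces a lower bound in the poset, rather than merely neighborhoods with pairwise disjoint closures, is the delicate point; everything else is routine.

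Finally, with $\aleph_1$-collectionwise Hausdorffness established under $\op{MA}_{\omega_1}$ for all locally compact normal spaces, Proposition \ref{prop8} follows at once from the Lemma. I would note that one can instead argue ``directly,'' decomposing $X$ into the clopen classes generated by a point-countable cover by sets with compact closure and showing each class is Lindel\"of; but routing through the Lemma isolates the one genuinely new ingredient, namely the ccc forcing producing disjoint expansions, and keeps the argument short.
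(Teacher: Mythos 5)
There is a fatal gap at the first stage of your plan: the claim that $\op{MA}_{\omega_1}$ implies every locally compact normal space is $\aleph_1$-collectionwise Hausdorff is false. $\op{MA}_{\omega_1}$ implies $\aleph_1<2^{\aleph_0}$ and that every set of reals of size $\aleph_1$ is a $Q$-set, and (as this paper itself notes in the section on Nyikos's problem) a $Q$-set yields a locally compact normal space that is \emph{not} $\aleph_1$-collectionwise Hausdorff --- for instance the Cantor tree $2^{<\omega}\cup X$ over a $Q$-set $X\subseteq 2^\omega$, a separable, locally compact, perfectly normal Moore space in which $X$ is an uncountable closed discrete set. In a separable space no uncountable family of pairwise disjoint nonempty open sets exists (each member would have to contain a point of the countable dense set), so $X$ cannot be separated. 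It follows that your poset of finite partial separations cannot be ccc for this space: your density argument is correct, so ccc together with $\op{MA}_{\omega_1}$ would produce exactly the impossible disjoint expansion. The ``delicate point'' you defer --- extracting ccc from normality alone via a $\Delta$-system and a normality-based resolution of the root --- is precisely where the argument must break down, and it cannot be repaired without further hypotheses on the space; that one cannot prove collectionwise Hausdorffness of normal first countable or locally compact spaces from $\op{MA}_{\omega_1}$ is essentially the content of the normal Moore space problem.

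The route you propose is therefore not the one Gruenhage and Koszmider take. Your second stage is fine: the quoted Lemma (locally compact, normal, $\aleph_1$-collectionwise Hausdorff, metalindel\"of spaces are paracompact) is a correct ZFC result from their paper, and it is exactly how Theorem \ref{thm4a} is obtained here --- but there the $\aleph_1$-collectionwise Hausdorffness is supplied by $\op{PFA}(S)[S]$, which unlike $\op{MA}_{\omega_1}$ destroys $Q$-sets. For Proposition \ref{prop8} itself, metalindel\"ofness must be used throughout the argument, not only in a final reduction, and $\op{MA}_{\omega_1}$ enters only through the purely combinatorial Proposition \ref{prop9} (decomposing $\omega_1$ into countably many sets $A_n$ with the stated avoidance property for the $Y_\alpha$), applied to data extracted from a point-countable open cover --- not through a forcing that expands an arbitrary closed discrete set to a disjoint open family.
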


Their only use of $\op{MA}_{\omega_1}$ was to prove the following proposition:

\begin{prop}\label{prop9}
  Assume $\op{MA}_{\omega_1}$.  Let $\{B_\alpha:\alpha<\omega_1\}$ be a
  collection of sets such that whenever $\{F_\alpha: \alpha <
  \omega_1\}$ is a disjoint collection of finite subsets of
  $\omega_1$, $\{\bigcup B_{\beta}: \beta \in F_\alpha, \alpha <
  \omega_1\}$ is not centered.  Let $\{Y_\alpha:\alpha<\omega_1\}$ be
  a collection of countable sets such that $|Y_\alpha -
  \bigcup\{B_\beta:\beta\in F\}| = \aleph_0$, for every finite $F
  \subseteq \omega_1-\{\alpha\}$.  Then $\omega_1 =
  \bigcup_{n<\omega}A_n$, where for each $n \in \omega$ and $\alpha
  \in \omega_1$, $|Y_\alpha - \bigcup\{B_{\beta}:\beta\in
  A_n-\{\alpha\}\}| = \aleph_0$.
\end{prop}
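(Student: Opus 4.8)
The plan is to use $\mathrm{MA}_{\omega_1}$ via a carefully chosen ccc poset whose generic filter produces the desired partition $\omega_1 = \bigcup_{n<\omega} A_n$.

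The plan is to apply $\op{MA}_{\omega_1}$ to a ccc poset $P$ of finite approximations and read off the partition as $A_n=c^{-1}(n)$ for the colouring $c:\omega_1\to\omega$ that the generic filter produces. A condition is a pair $p=(f_p,g_p)$, where $f_p$ is a finite partial function from $\omega_1$ to $\omega$ (a piece of $c$) and $g_p$ assigns to finitely many pairs $(\alpha,n)\in\omega_1\times\omega$ a finite set $g_p(\alpha,n)\subseteq Y_\alpha$ of ``protected'' points, subject to the coherence requirement that $y\notin B_\beta$ whenever $y\in g_p(\alpha,n)$, $\beta\in\dom f_p$, $f_p(\beta)=n$ and $\beta\neq\alpha$. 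I order $P$ by letting $q\le p$ mean $f_q\supseteq f_p$ and $g_q(\alpha,n)\supseteq g_p(\alpha,n)$ for every $(\alpha,n)\in\dom g_p$; thus extending a condition may colour new ordinals and protect new points, but is forbidden from colouring any $\beta$ by $n$ when that would swallow an already protected point carrying colour $n$.

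Granting that $P$ is ccc, the proposition follows from two density arguments. First, for each $\gamma<\omega_1$ the set of $p$ with $\gamma\in\dom f_p$ is dense, since one may always colour $\gamma$ by a fresh $n$ exceeding every colour occurring in $g_p$, violating no protection; a generic filter $G$ then yields a total $c=\bigcup_{p\in G}f_p$. Second --- and this is the step that uses the hypothesis on the $Y_\alpha$ --- for each $(\alpha,n)$ and each $k$ the set of $p$ with $\lvert g_p(\alpha,n)\rvert\ge k$ is dense: given $p$, the forbidden set $\bigcup\{B_\beta:\beta\in\dom f_p,\ f_p(\beta)=n,\ \beta\neq\alpha\}$ equals $\bigcup\{B_\beta:\beta\in F\}$ for a finite $F\subseteq\omega_1\setminus\{\alpha\}$, so by assumption $Y_\alpha$ meets its complement in $\aleph_0$ points and $g_p(\alpha,n)$ can be enlarged. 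Consequently each $\bigcup_{p\in G}g_p(\alpha,n)$ is an infinite subset of $Y_\alpha$ every point of which, by coherence and genericity, lies outside $\bigcup\{B_\beta:\beta\in A_n\setminus\{\alpha\}\}$, which is exactly the required conclusion.

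The main obstacle is verifying that $P$ is ccc, and this is where the non\-centredness hypothesis on the $B_\alpha$ enters. I would begin with a putative uncountable antichain and thin it by the $\Delta$-system lemma so that the supports $\dom f_p\cup\{\alpha:(\alpha,n)\in\dom g_p\}$ form a $\Delta$-system with root $R$, the conditions agree over $R$, and a fixed finite set of colours is used. The fact to exploit is that two conditions with the same $f$ are automatically compatible, their coherence requirements being preserved under union; hence after thinning incompatibility of $p_\xi$ and $p_\eta$ can only be a coherence conflict, namely a protected point of one condition lying in $B_\beta$ for an off\-root $\beta$ of the other that carries the matching colour. Since each condition protects only finitely many points and uses finitely many colours, I would pigeonhole to find a single point $y^{*}$ and colour $n^{*}$ witnessing the conflict of uncountably many conditions with one fixed condition; setting $F_\xi=\{\beta\in\dom f_{p_\xi}\setminus R:f_{p_\xi}(\beta)=n^{*},\ y^{*}\in B_\beta\}$ then gives a disjoint family of finite sets with $y^{*}\in\bigcup\{B_\beta:\beta\in F_\xi\}$ for uncountably many $\xi$, so that family is centred, contradicting the hypothesis.

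I expect the genuinely delicate part to be controlling the \emph{direction} of the conflict in this pigeonhole: a priori the conflicting protected point may belong to either of the two conditions, and only the case in which one fixed condition repeatedly supplies the point yields the common $y^{*}$ above. Handling the opposite direction uniformly --- equivalently, ruling out an antichain in which every condition is only ever the ``$B_\beta$-side'' of its conflicts --- is the crux. I would attack it either by a simultaneous pigeonhole over the whole antichain or by running the argument inside a countable elementary submodel $M\prec H_\theta$, using $\delta=M\cap\omega_1$ to pin the witnessing point and colour into $M$ before reflecting the existence of a compatible condition back down. The non\-centredness assumption is precisely calibrated so that, once a single point and colour have been isolated, the resulting disjoint family cannot be centred.
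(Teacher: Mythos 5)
The paper does not actually prove Proposition \ref{prop9}; it is quoted from Gruenhage and Koszmider \cite{Gruenhage1996}, so there is no in-paper argument to compare against. Judged on its own terms, your setup is the right one: the poset of finite colourings together with finite ``promises'' of protected points of $Y_\alpha$, the two density families, and the way genericity converts coherence into the conclusion $|Y_\alpha-\bigcup\{B_\beta:\beta\in A_n-\{\alpha\}\}|=\aleph_0$ are all correct, and you correctly locate where each hypothesis is used. But the one thing you do not actually prove --- that $P$ is ccc --- is the entire content of the proposition, and the case you flag as ``the crux'' is a genuine gap, not a detail. Your pigeonhole disposes only of the case in which some single condition $p_{\xi_0}$ supplies a protected point $y^*$ swallowed by uncountably many of the others; that case does yield a centered family and a contradiction. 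The opposite case cannot be handled by the free-set device you gesture at: the relevant set mapping $\xi\mapsto W_\xi=\{\eta:\text{some protected point of }p_\xi\text{ lies in some }V^n_\eta\}$ has countable values (this much \emph{does} follow from the non-centredness hypothesis, since an uncountable $S_{y,n}=\{\eta: y\in V^n_\eta\}$ would already give a centered family), but a set mapping $\omega_1\to[\omega_1]^{\le\omega}$ need not admit even a two-element free set (consider $f(\alpha)=\alpha$), so ``all $W_\xi$ countable'' plus ``every pair conflicts'' is not yet a contradiction.

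What is actually needed in that second case is a further use of the hypothesis. Since each $W_\xi$ is countable, hence bounded, the set $C=\{\delta:(\forall\xi<\delta)\,W_\xi\subseteq\delta\}$ is a club, and for $\xi<\delta\le\eta$ with $\delta\in C$ the conflict between $p_\xi$ and $p_\eta$ must be witnessed by a protected point of $p_\eta$ landing in $V^n_\xi$. Thus every $V^n_\xi$ absorbs protected points from a tail of the antichain, and one must then pigeonhole over the (boundedly many) protected points and colours of each later condition to extract a single tail of points lying in cofinally many of the sets $\bigcup\{B_\beta:\beta\in F_\xi\}$, producing the forbidden centered family. This step is delicate precisely because different $\eta$'s may witness their conflicts with different $\xi$'s via different protected points, so a finite intersection $\bigcap_j V^{n_j}_{\xi_j}$ is not automatically inhabited by one common point; arranging that it is requires an additional uniformisation (a second $\Delta$-system/counting argument on the protected-point patterns) that your sketch does not supply. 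Until that is done, the ccc claim --- and with it the whole proof --- is unestablished.
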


In fact, analyzing their use of Proposition \ref{prop9} in their
proof, we observe that they only needed that each stationary
$S\subseteq\omega_1$ included a stationary $T$ such that for every
$\alpha\in T$, $|Y_\alpha\,-\,\bigcup\{B_{\beta}:\beta\in
T-\{\alpha\}\}| = \aleph_0$.  This follows from there being a closed
unbounded $C$ such that for $\alpha\in C$, $|Y_\alpha -
\bigcup\{B_{\beta}:\beta\in C-\{\alpha\}\}|=\aleph_0$. We conjecture
this follows from PFA$(S)[S]$.

Watson \cite{Watson1986} constructed a locally compact, perfectly normal,
metalindel\"of, non-paracompact space from
$\op{MA}_{\omega_1}(\sigma$-centered$)$ plus the existence of a Souslin
tree. It is certainly consistent that there are no locally compact,
perfectly normal, metalindel\"of spaces that are not paracompact. It follows from Proposition \ref{prop8}, and in
fact I showed that $\op{MA}_{\omega_1}$ implied there weren't any a long
time ago in \cite{Tall1974}.

\section{Weakening the model of Theorem \ref{thm4}}

One wonders whether all of the requirements of the model of Theorem
\ref{thm4} are necessary.  Whether large cardinals are necessary has not yet been investigated. I conjecture that they are not, except possibly for an inaccessible.  Avoiding that issue, two others remain:
\begin{enumerate}
\item Is the preliminary forcing used in \cite{Larson} before forcing
  $\op{PFA}(S)[S]$ necessary?
\item Do we just need $\op{PFA}(S)[S]$, or do we need a model of
  $\op{PFA}(S)[S]$ constructed by the usual iteration, i.e. following the
  usual proof of the consistency of $\op{PFA}$, but using only those
  partial orders that preserve $S$ \cite{Miyamoto1993}?
\end{enumerate}

We can answer the first question negatively; our particular answer
however requires the second alternative for the second question.  The
preliminary forcing in \cite{Larson} --- adding $\lambda^+$ Cohen
subsets of $\lambda$ for every regular $\lambda \ge$ a supercompact
$\kappa$ --- was done so as to assure we could get full collectionwise
Hausdorffness from the $\aleph_1$-collectionwise Hausdorffness
provided by the Souslin tree forcing. An old consistency result of
Shelah \cite{Shelah1977} recast as a proof from a reflection axiom
\cite{Fleissner1986}, \cite{Dow1992}, \cite{Fuchino}, tells us that
under such an axiom, locally separable, first countable,
$\aleph_1$-collectionwise Hausdorff spaces are collectionwise
Hausdorff.  However, such reflection axioms do not follow from
$\op{PFA}(S)[S]$, but require a stronger principle holding in the usual
iteration model for $\op{PFA}(S)[S]$.  Now for the details.

First of all, the relevance of ``local separability'' is:
\begin{lem}
  If every first countable, hereditarily Lindel\"of, regular space is
  hereditarily separable, then locally compact perfectly normal spaces
  are locally separable.
\end{lem}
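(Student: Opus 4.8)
The plan is to verify local separability pointwise, reducing everything to a single compact neighborhood and then feeding that neighborhood into the hypothesis. Let $X$ be locally compact and perfectly normal, and fix $x \in X$. By local compactness, choose a compact neighborhood $K$ of $x$, so that $x \in \operatorname{int} K$. Since perfect normality is hereditary, $K$ is itself a compact, perfectly normal (hence regular, being compact Hausdorff) space. It then suffices to show that $K$ is separable, for then $\operatorname{int} K$ is a separable open neighborhood of $x$ (a countable dense subset of $K$ meets $\operatorname{int} K$ in a countable dense subset of $\operatorname{int} K$), whence $X$ is locally separable.

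To apply the hypothesis to $K$, I must exhibit $K$ as first countable and hereditarily Lindel\"of. For first countability, I use that in a perfectly normal space every singleton is closed, hence a $G_\delta$; so for each $y \in K$ we may write $\{y\} = \bigcap_n U_n$ with the $U_n$ open. In a compact Hausdorff space a $G_\delta$ point automatically has countable character: shrinking the $U_n$ by normality to a decreasing sequence $V_n$ with $\overline{V_n} \subseteq U_n$ and using compactness, any open $V \ni y$ must contain some $V_n$ (since $K \setminus V$ is compact and covered by the increasing open sets $K \setminus \overline{V_n}$). Thus $\{V_n\}$ is a countable base at $y$, and $K$ is first countable. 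For hereditary Lindel\"ofness, I use the dual form of perfect normality: every open $U \subseteq K$ is $F_\sigma$, say $U = \bigcup_n F_n$ with each $F_n$ closed in the compact space $K$, hence compact; so $U$ is $\sigma$-compact and therefore Lindel\"of. Since every open subspace of $K$ is Lindel\"of, $K$ is hereditarily Lindel\"of.

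With $K$ now known to be first countable, hereditarily Lindel\"of, and regular, the hypothesis applies directly and yields that $K$ is hereditarily separable, in particular separable. This completes the argument as sketched above.

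The main obstacle is the implication that a compact perfectly normal space is both first countable and hereditarily Lindel\"of; everything else is bookkeeping. The more delicate of the two is first countability, which rests on the classical compact-Hausdorff fact that a $G_\delta$ point has countable character --- the compactness argument that converts the countable $G_\delta$ representation into an honest neighborhood base is the real content. The hereditary Lindel\"of direction is comparatively soft, following from the fact that perfect normality makes open sets $\sigma$-compact.
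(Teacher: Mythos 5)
Your proof is correct and follows essentially the same route the paper intends: reduce to a compact neighborhood, observe that compact perfectly normal spaces are first countable and hereditarily Lindel\"of, and apply the hypothesis to get separability. The paper's own proof is only a one-line remark (that locally compact perfectly normal spaces are first countable) followed by the citation of the hereditary-separability lemma; you have simply supplied the standard details it leaves implicit.
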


\begin{proof}
To see this, note that locally compact,
perfectly normal spaces are first countable. Next, note:

\begin{lem}[~\cite{Larson2002}]
  $\op{MA}_{\omega_1}(S)[S]$ implies every first countable, hereditarily
  Lindel\"of, regular space is hereditarily separable.
\end{lem}
\end{proof}

$\op{MA}_{\omega_1}(S)[S]$ of course follows from $\op{PFA}(S)[S]$.  There are
two reflection axioms in the literature we want to focus on, but we
will not actually need their complicated definitions.  ``Axiom $R$''
was introduced by Fleissner \cite{Fleissner1986}, who proved it
implied \emph{locally separable, first countable,
  $\aleph_1$-collectionwise Hausdorff spaces are collectionwise
  Hausdorff}. In \cite{Fuchinoa}, the authors interpo- lated a new
axiom, $FRP$, obtaining:

\begin{lem}[~\cite{Fuchino}, \cite{Fuchinoa}]
  Axiom $R$ implies $FRP$, which implies every locally separable,
  first countable, $\aleph_1$-collectionwise Hausdorff space is
  collectionwise Hausdorff.
\end{lem}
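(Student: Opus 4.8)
The plan is to treat the two implications in the statement separately, since both are results extracted from \cite{Fuchino} and \cite{Fuchinoa}; what follows is a sketch of the architecture of each, the precise formulations of Axiom $R$ and $FRP$ being those of the cited papers.

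For \emph{Axiom $R$ implies $FRP$}, the point is that both are reflection principles asserting that a combinatorial obstruction living on a large structure already appears on a substructure of size $\aleph_1$, and that the reflection supplied by Axiom $R$ is strong enough to yield the weaker, Fodor-type reflection that $FRP$ demands. First I would unwind $FRP$ to its combinatorial core: for every regular $\lambda > \aleph_1$, every stationary set $S$ concentrated on the points of cofinality $\omega$ below $\lambda$, and every suitable regressive coloring, there is a reflecting set $I$ of size $\aleph_1$ on which a pressing-down (Fodor-type) conclusion holds. Then I would feed the relevant stationary set and coloring into Axiom $R$, whose hypothesis provides a reflecting elementary substructure (equivalently a continuous $\in$-chain) of size $\aleph_1$, and verify that the reflection it delivers witnesses exactly the Fodor-type conclusion required.

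For \emph{$FRP$ implies the topological statement}, I would argue by contradiction. Suppose $X$ is locally separable, first countable, and $\aleph_1$-collectionwise Hausdorff but not collectionwise Hausdorff, and fix a closed discrete $D \subseteq X$ that cannot be separated by a disjoint family of open sets. Since $X$ is $\aleph_1$-collectionwise Hausdorff, every subset of $D$ of size at most $\aleph_1$ \emph{can} be separated, so the obstruction lives above $\aleph_1$; I would take $\lambda = |D|$ of least possible size. Using local separability I would fix, around a neighbourhood of $D$, countable dense pieces, and using first countability a decreasing countable neighbourhood base at each point of $D$; from these data I would manufacture the regressive coloring on the relevant countable subsets to which $FRP$ applies, encoding the failure of separation. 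Applying $FRP$ then yields a reflecting set of size $\aleph_1$ on which the encoded obstruction already appears --- that is, a subfamily of $D$ of size at most $\aleph_1$ that cannot be separated, contradicting $\aleph_1$-collectionwise Hausdorffness.

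The main obstacle is the construction of the coloring in the second implication: packaging a topological separation problem as a Fodor-type reflection instance so that the reflecting set of size $\aleph_1$ produced by $FRP$ corresponds to a subfamily of $D$ whose (non)separation is genuinely controlled by $\aleph_1$-collectionwise Hausdorffness. The hypotheses of local separability and first countability are exactly what make this encoding possible --- they bound the local character and supply the countable dense sets, so that the color attached to each relevant countable subset faithfully records whether the surrounding local pieces can be disentangled. Arranging that the reflection respects this encoding, rather than merely reflecting stationarity abstractly, is the delicate step, and it is here that the precise definitions of $FRP$ in \cite{Fuchino} and \cite{Fuchinoa} carry the real weight.
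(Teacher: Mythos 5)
First, note that the paper itself offers no proof of this lemma: it is imported verbatim from \cite{Fuchino} and \cite{Fuchinoa} and used as a black box, so there is no internal argument to compare yours against. Judged on its own terms, your proposal correctly identifies the two-part structure (Axiom $R$ $\Rightarrow$ $FRP$, then $FRP$ $\Rightarrow$ the topological statement) and the general reflection architecture --- minimal unseparated closed discrete set, local separability and first countability supplying the countable local data, reflection down to a subset of size $\aleph_1$ contradicting $\aleph_1$-collectionwise Hausdorffness. That is indeed the shape of the Shelah--Fleissner--Fuchino line of argument.

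However, as a proof attempt it has genuine gaps rather than merely suppressed routine detail, because every substantive step is stated as an intention (``I would unwind,'' ``I would manufacture'') and the combinatorial content is deferred back to the very papers being cited. Concretely: (i) you never state $FRP$ precisely enough to derive it --- the actual principle concerns ladder systems $g\colon S\to[\lambda]^{\le\aleph_0}$ on stationary subsets of the points of countable cofinality and reflection to a set $I$ of size $\aleph_1$ with $cf(\sup I)=\omega_1$, and the derivation from Axiom $R$ hinges on the fact that Axiom $R$ lets one prescribe the $\omega_1$-club family from which the reflecting set is drawn; none of that is exhibited. (ii) In the topological implication, before $FRP$ can be applied at all one must reduce to the case where the minimal cardinality $\lambda$ of an unseparated closed discrete set is a \emph{regular} cardinal $\ge\aleph_2$ (the singular case is handled by a separate Fleissner-style argument using first countability); your sketch takes ``$\lambda=|D|$ of least possible size'' without addressing this. (iii) The ``coloring'' you promise to manufacture --- the step you yourself flag as the delicate one --- is exactly the content of the theorem, and it is not constructed. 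So the proposal is a correct roadmap but not a proof; to count as one it would need the explicit $FRP$ instance built from the countable dense sets and neighbourhood bases, and the verification that non-reflection of the separation obstruction contradicts the Fodor-type conclusion.
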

\begin{defn}[~\cite{Beaudoin1987}]
  $\mathbf{MA_{\boldsymbol{\omega}_1}(}$\textbf{countably closed,}
    $\boldsymbol{\kappa}\mathbf{)}$ is the assertion that if $P$ is a countably
  closed partial order, $\mc{D}$ is a family of at most $\aleph_1$
  dense subsets of $P$, and $\{S_\alpha:\alpha<\kappa\}$ is a family
  of cardinality $\kappa$ of $P$-terms, each forced by every condition
  in $P$ to denote a stationary subset of $\omega_1$, then there is a
  $\mc{D}$-generic filter $G$ on $P$ so that for every
  $\alpha<\kappa$, $S_\alpha(G)$ is stationary, where:
\[ S_\alpha(G) = \{\beta<\omega_1:(\exists p \in G)p\Vdash\check{\beta}\in\dot{S}_\alpha\}. \]
\end{defn}

$\mathbf{MA_{\boldsymbol{\omega}_1}(}$\textbf{proper,}
$\boldsymbol{\kappa}\mathbf{)}$ is defined analogously.  Baumgartner
\cite{Baumgartner1984} denotes $\op{MA}_{\omega_1}$(proper, $\aleph_1$) by
$\op{PFA}^+$; some authors use $\op{PFA}^{++}$ for ``$\op{MA}_{\omega_1}$(proper,
$\aleph_1$)'' and ``$\op{PFA}^+$'' for ``$\op{MA}_{\omega_1}$(proper, $1$)''.
We shall use Baumgartner's notation.

It is known that:
\begin{lem}[~\cite{Baumgartner1984}]
  $\op{PFA}^+$ holds in the usual iteration model for $\op{PFA}$.
\end{lem}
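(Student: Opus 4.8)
The plan is to recall the standard construction of the $\op{PFA}$ model and then check that the very same argument which produces $\op{PFA}$ already yields the extra stationarity-preservation clause of $\op{PFA}^+$. So I would begin with a supercompact cardinal $\kappa$ carrying a Laver function $\ell\colon\kappa\to V_\kappa$, and let $\langle P_\alpha,\dot Q_\alpha:\alpha<\kappa\rangle$ be the countable support iteration in which, at each stage $\alpha$, one forces with the proper poset named by $\ell(\alpha)$ (and trivially otherwise). Two facts set the stage. First, by Shelah's theorem a countable support iteration of proper forcings is proper, so $P_\kappa$ is proper, preserves $\omega_1$, and makes $\kappa=\omega_2$. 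Second, the one ingredient that does the real work for the ``$+$'': every proper forcing preserves stationary subsets of $\omega_1$, and therefore so does every tail $P_\kappa/P_\beta$ of the iteration.

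Next I would invoke the standard Laver-catching property of this iteration, which is verified using the supercompactness embedding in the usual way. Given, in the final model $V[G]$, a proper poset $Q$, a family $\mc D$ of at most $\aleph_1$ dense subsets of $Q$, and $\aleph_1$-many $Q$-names $\dot S_\alpha$ each forced to denote a stationary subset of $\omega_1$, the catching property produces a stage $\gamma<\kappa$ such that $\dot Q_\gamma=Q$ and all of the data $\mc D$ and $\{\dot S_\alpha:\alpha<\omega_1\}$ already lie in $V[G_\gamma]$. Let $H$ be the generic filter for $Q$ added at coordinate $\gamma$, so that $V[G_{\gamma+1}]=V[G_\gamma][H]$ and $H\in V[G]$. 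Since $H$ is fully generic over $V[G_\gamma]$ it meets every dense set in $V[G_\gamma]$, in particular every member of $\mc D$; thus $H$ is the required $\mc D$-generic filter.

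The crux is to verify that each $S_\alpha(H)$ is stationary in $V[G]$, and this is exactly where $\op{PFA}^+$ goes beyond $\op{PFA}$. Because $H$ is generic for $Q$ over the model in which $\dot S_\alpha$ lives, the set $S_\alpha(H)=\{\beta<\omega_1:(\exists p\in H)\,p\Vdash\check\beta\in\dot S_\alpha\}$ is just the interpretation $\dot S_\alpha[H]$; and since $\dot Q_\gamma=Q$ forces $\dot S_\alpha$ to be stationary, $\dot S_\alpha[H]$ is stationary in $V[G_{\gamma+1}]$. Now the remainder of the iteration, $P_\kappa/P_{\gamma+1}$, is a proper forcing, so by the preservation fact recorded above it cannot destroy the stationarity of $\dot S_\alpha[H]$. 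Hence $S_\alpha(H)$ is still stationary in $V[G]$, for every $\alpha<\omega_1$ at once.

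The main obstacle, and the only place demanding genuine care, is this last preservation step together with the bookkeeping that feeds it. One must arrange the Laver guessing so that $Q$, all of $\mc D$, and all $\aleph_1$ of the names $\dot S_\alpha$ are captured at a single stage $\gamma$, so that one and the same proper tail $P_\kappa/P_{\gamma+1}$ simultaneously preserves the stationarity of all the $S_\alpha(H)$; and one leans on the two theorems isolated in the first paragraph --- that countable support iterations of proper forcings are proper, and that proper forcing preserves stationary subsets of $\omega_1$. Everything else is the routine verification of $\op{PFA}$ in this model.
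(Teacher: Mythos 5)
The paper offers no proof of this lemma --- it is quoted from Baumgartner's Handbook article --- so the comparison here is with the standard argument you were trying to reconstruct. You have correctly isolated the one idea that upgrades $\op{PFA}$ to $\op{PFA}^+$ in this model: the filter for $Q$ obtained as an actual generic over an intermediate model interprets each name $\dot S_\alpha$ as a stationary set there, all $\aleph_1$ names are handled at a single stage, and the remainder of the countable support iteration is proper, hence preserves stationary subsets of $\omega_1$ and cannot kill any of them. That is indeed the heart of Baumgartner's proof.

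There is, however, a genuine gap in the step you call ``the standard Laver-catching property.'' The Laver function $\ell$ is fixed in $V$ before the iteration is defined, so each iterand $\dot Q_\gamma$ for $\gamma<\kappa$ is determined in advance; there is no mechanism producing a stage $\gamma<\kappa$ at which a given proper poset $Q$ of the final model is literally forced with. Indeed $Q$ need not lie in any $V[G_\gamma]$ with $\gamma<\kappa$, and even when it does, properness in $V[G]$ need not transfer down to $V[G_\gamma]$; unlike the $\op{MA}$ situation, there is no reduction of $\op{PFA}$ to small posets that would repair this. The actual argument takes a $\lambda$-supercompactness embedding $j:V\to M$ with $j(\ell)(\kappa)$ a name for $Q$, factors $j(P_\kappa)$ in $M$ as $P_\kappa\ast\dot Q\ast\dot R$, and runs your preservation argument one level up: with $G\ast H\ast K$ generic, each $\dot S_\alpha[H]$ is stationary in $V[G][H]$ and stays stationary in $V[G][H][K]$ because $\dot R$ is proper there; the filter on $j(Q)$ generated by $j''H$ meets every $j(D)$ for $D\in\mc D$ and interprets each $j(\dot S_\alpha)$ as a superset of $\dot S_\alpha[H]$ (since $j(\beta)=\beta$ for $\beta<\omega_1$), hence as a stationary set of $M[G\ast H\ast K]$; one then pulls the resulting existence statement back to $V[G]$ by elementarity of the lifted embedding. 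Your preservation idea is exactly what makes this work, but without the embedding and the closing elementarity step the proof does not go through as written.
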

\begin{lem}[~\cite{Beaudoin1987}]
  $\op{MA}_{\omega_1}$(countably closed, $1$) implies Axiom $R$.
\end{lem}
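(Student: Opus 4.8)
The plan is to read Axiom $R$ as a stationary-reflection principle: for an uncountable cardinal $\lambda$, a stationary $S\subseteq[\lambda]^{\aleph_0}$, and a prescribed reflecting family $\mc W\subseteq[\lambda]^{\aleph_1}$ that is cofinal in $[\lambda]^{\aleph_1}$ and closed under unions of $\subseteq$-increasing $\omega_1$-chains, it yields some $X\in\mc W$ with $\abs{X}=\aleph_1$ such that $S\cap[X]^{\aleph_0}$ is stationary in $[X]^{\aleph_0}$. I would obtain such an $X$ as the union of a generic continuous $\in$-increasing chain of countable models, the chain being supplied by an application of $\op{MA}_{\omega_1}$(countably closed, $1$) to a poset tailored to $S$. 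The point of the \emph{single} stationary term permitted by the axiom is precisely to force the reflection of $S$; the $\aleph_1$ dense sets serve only to drive the construction up to length $\omega_1$.

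Fix a regular $\theta$ with $S,\lambda,\mc W\in H(\theta)$, and let $P$ consist of all countable continuous $\in$-increasing chains $p=\langle N^p_\xi:\xi\le\gamma_p\rangle$ (with $\gamma_p<\omega_1$) of countable elementary submodels of $H(\theta)$ satisfying $\{S,\lambda,\mc W\}\subseteq N^p_0$, ordered by end-extension. Since the union of a descending $\omega$-sequence of conditions is again a condition (one appends the new limit model at the top, where continuity then holds), $P$ is countably closed, so it adds no new countable subsets of $\lambda$ and preserves $\omega_1$ together with the club filter on $[\lambda]^{\aleph_0}$. For each $\alpha<\omega_1$ the set $D_\alpha=\{p:\gamma_p\ge\alpha\}$ is dense, and these $\aleph_1$ sets constitute $\mc D$; a filter meeting every $D_\alpha$ produces a chain of length $\omega_1$ whose union $X=\bigcup_\xi(N_\xi\cap\lambda)$ has size $\aleph_1$ and contains $\omega_1$. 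Whatever side condition Axiom $R$ imposes on the reflecting set (membership of $X$ in $\mc W$) is encoded as an extra requirement that the $N_\xi$ be elementary in a structure coding $\mc W$; the closure of $\mc W$ under $\omega_1$-chains then forces $X\in\mc W$. This is routine bookkeeping.

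Let $\dot T$ name $\{\xi<\omega_1:N_\xi\cap\lambda\in S\}$, where $\langle N_\xi\rangle$ is the generic chain. The crux is the claim that \emph{every} condition forces $\dot T$ to be stationary; this, rather than demanding $N^p_\xi\cap\lambda\in S$ outright, is what I build the argument around, since the latter requirement would destroy the density of the $D_\alpha$ (as $S$ is merely stationary). To prove the claim, fix a condition $q$ and a name $\dot E$ forced to be club in $\omega_1$, and choose $\theta$ with $\dot E,P,q\in H(\theta)$. The set of $A=M\cap\lambda$ for countable $M\prec(H(\theta),\in,\dot E,P,S,q)$ is club in $[\lambda]^{\aleph_0}$, so by stationarity of $S$ there is such an $M$ with $M\cap\lambda\in S$. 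Working inside $M$ (which contains $q$ and all its models), I build a descending $\omega$-sequence of conditions below $q$, at each step deciding a member of $\dot E$ above the current length and enlarging the top model through an increasing chain of submodels exhausting $M$, so that the lengths converge to some $\delta$, the limit condition (furnished by countable closure) forces $\delta\in\dot E$, and its top model $N_\delta$ satisfies $N_\delta\cap\lambda=M\cap\lambda\in S$. This diagonalization, coordinating the decisions about $\dot E$ with steering the limit model into $S$, is the step I expect to be the main obstacle.

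Applying $\op{MA}_{\omega_1}$(countably closed, $1$) with $\mc D=\{D_\alpha:\alpha<\omega_1\}$ and the single term $\dot T$ then delivers a $\mc D$-generic filter $G$ for which the chain has length $\omega_1$ and $T=\{\xi:N_\xi\cap\lambda\in S\}$ is stationary in $\omega_1$. Put $X=\bigcup_{\xi<\omega_1}(N_\xi\cap\lambda)\in\mc W$. The sequence $\langle N_\xi\cap\lambda:\xi<\omega_1\rangle$ is $\subseteq$-continuous, increasing, and cofinal in $[X]^{\aleph_0}$ (every countable subset of $X$ lies in some $N_\xi\cap\lambda$, as $\op{cf}(\omega_1)>\omega$), so for any club $D\subseteq[X]^{\aleph_0}$ the set $\{\xi:N_\xi\cap\lambda\in D\}$ is club in $\omega_1$; intersecting it with the stationary set $T$ yields $\xi$ with $N_\xi\cap\lambda\in S\cap D$. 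Hence $S\cap[X]^{\aleph_0}$ is stationary in $[X]^{\aleph_0}$, which is exactly the conclusion of Axiom $R$.
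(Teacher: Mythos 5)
The paper offers no proof of this lemma at all---it is quoted from Beaudoin's thesis---so there is nothing internal to compare against; your argument is, in essence, a correct reconstruction of Beaudoin's original proof. The key points are all in place: the poset of countable continuous $\in$-increasing chains of countable elementary submodels is countably closed, the density argument (taking a countable $M\prec H(\theta')$ with $M\cap\lambda\in S$ and exhausting $M\cap H(\theta)$ while deciding members of $\dot E$) correctly shows every condition forces $\dot T$ to be stationary, the single preserved stationary term is exactly what transfers the stationarity of $S$ down to $[X]^{\aleph_0}$, and the $\mc W$-membership bookkeeping you defer really is routine (interleave $W_\xi\in\mc W\cap N_{\xi+1}$ covering $N_\xi\cap\lambda$ and invoke closure of $\mc W$ under unions of increasing $\omega_1$-chains).
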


Since countably closed partial orders preserve Souslin trees, we see
that $\op{MA}_{\omega_1}$(countably closed, 1)$(S)$ also implies Axiom $R$
and so $\op{PFA}^+$ does as well.  It is not known whether
$\op{MA}_{\omega_1}$(proper, 1)$(S)[S]$ implies Axiom $R$,
but $\op{PFA}^+(S)[S]$ implies Axiom $R$ \cite{LarsonOTHP}.

For $FRP$, there is a less specialized result:
\begin{lem}[~\cite{Fuchino}]
$FRP$ is preserved by countable chain condition forcing.
\end{lem}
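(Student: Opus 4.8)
The plan is to show that the statement of $FRP$ is robust under ccc forcing by locating a witness in the ground model $V$ and checking that it survives into the ccc extension $V[G]$. I would lean on three standard features of a ccc poset $P$: it preserves cardinals and cofinalities (so a regular $\lambda\ge\omega_2$ of $V$ stays regular in $V[G]$, and $E^\lambda_\omega:=\{\alpha<\lambda:\op{cf}(\alpha)=\omega\}$ is computed the same way on ground-model ordinals); it preserves stationarity, both of subsets of a regular uncountable $\lambda$ and of the traces occurring in the reflection clause; and it has the countable covering property, namely every countable set of ordinals in $V[G]$ lies inside a countable set belonging to $V$. The last fact is the engine: if $\dot a=\{\dot a_n:n<\omega\}$ names a countable set, then for each $n$ the collection of possible values of $\dot a_n$ is countable by the countable chain condition, so their union is a ground-model countable cover.

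Let $\dot S$ name a stationary subset of $E^\lambda_\omega$ and $\dot g$ name a function from $\dot S$ into $[\lambda]^{\le\aleph_0}$ with $\dot g(\alpha)\subseteq\alpha$. First I would descend to $V$. Put $S'=\{\alpha\in E^\lambda_\omega:(\exists p\in P)\,p\Vdash\check\alpha\in\dot S\}$; since $\Vdash\dot S\subseteq\check{S'}$ and ccc forcing keeps every ground-model club of $\lambda$ club, the forced stationarity of $\dot S$ forces $S'$ to meet each such club, so $S'$ is stationary already in $V$. For each $\alpha\in S'$ the covering property, applied to the (at most countably many, by ccc) forced values of $\dot g(\alpha)$ below $\alpha$, yields a ground-model countable $g'(\alpha)\subseteq\alpha$ with $\Vdash\dot g(\alpha)\subseteq\check{g'(\alpha)}$ below any condition placing $\alpha$ in $\dot S$. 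Now apply $FRP$ in $V$ to $\lambda$, $S'$, and $g':S'\to[\lambda]^{\le\aleph_0}$ to obtain a reflecting set $I\in[\lambda]^{\aleph_1}\cap V$ satisfying the reflection clause of $FRP$ for $g'$.

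Finally I would argue that this same $I$ witnesses $FRP$ for $\dot S$ and $\dot g$ in $V[G]$. Two things must transfer. The stationarity of the trace of $\dot S$ inside $I$ (equivalently, the stationary set witnessing that $I$ reflects) survives because $P$ is ccc and preserves stationarity; and the reflection clause itself transfers by monotonicity, since $\Vdash\dot g(\alpha)\subseteq\check{g'(\alpha)}$ means the clause --- which asserts that $I$ is ``small'' relative to the $g$-values (e.g.\ that $g(\alpha)\cap I$ is bounded in $\alpha$ along the relevant trace) --- is only easier to satisfy for $\dot g$ than for the larger $g'$. Hence $I$ reflects $\dot g$ in $V[G]$, and as $\dot S,\dot g$ were arbitrary, $\Vdash_P FRP$. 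The main obstacle is exactly this last transfer: one must verify that the precise reflection clause of $FRP$ is monotone in the correct direction under $\dot g\subseteq g'$, and that the stationary witness delivered by $FRP$ in $V$ is of a type (a subset of an ordinal of uncountable cofinality, or of $[\,\cdot\,]^{\aleph_0}$) whose stationarity ccc forcing is guaranteed to preserve; the descent via covering is the routine part.
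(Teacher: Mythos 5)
The paper offers no proof of this lemma---it is quoted directly from the Fuchino et al.\ work on $FRP$---so there is nothing in-paper to compare you against; I can only judge your sketch on its own terms. Its skeleton (descend to $V$ by ccc covering, apply $FRP$ in $V$, lift the witness $I$ back up) is the natural one, and the descent half is correct: $S'=\{\alpha:\exists p\, p\Vdash\check\alpha\in\dot S\}$ is stationary in $V$ because ccc forcing keeps ground-model clubs club, and the countable covering property gives ground-model countable $g'(\alpha)\supseteq\dot g(\alpha)$. The problem is that the lift is where the whole content of the theorem lives, and your treatment of it does not go through. The reflection clause of $FRP$ is not the boundedness condition you gesture at; it is a Fodor-type clause: for \emph{every} regressive $f:S\cap I\to\kappa$ with $f(\alpha)\in g(\alpha)\cup\{0\}$ there is $\xi^*$ with $f^{-1}[\{\xi^*\}]$ stationary in $\sup(I)$. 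This clause is universally quantified over choice functions $f$, and the ccc extension adds new ones: although $\dot g(\alpha)\subseteq g'(\alpha)$ shrinks the set of permitted values, a function in $V[G]$ selecting one value from each $g'(\alpha)$ need not lie in $V$, so the ground-model instance of $FRP$ simply says nothing about it. ``Monotonicity in $g$'' is therefore not the issue and cannot close the gap.

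A second difficulty sits in the same step. Taking $f\equiv 0$ in the clause above shows that $FRP$ demands $S\cap I$ be stationary in $\sup(I)$. Your ground-model application yields this for $S'\cap I$; but $\dot S[G]$ may be a proper subset of $S'$, and ``ccc preserves stationarity'' protects ground-model stationary sets---it does not make the trace of the smaller set $\dot S[G]$ on $\sup(I)$ stationary. So neither of the two things you say ``must transfer'' transfers by the mechanisms you invoke. What is needed---and what the cited proof supplies---is an additional idea: one passes to an equivalent formulation of $FRP$ in which the reflection clause is a statement about a single set determined by $S$, $g$ and $I$ (so that lifting it reduces to the genuinely ccc-preserved fact that a stationary subset of an ordinal of cofinality $\omega_1$ stays stationary), and the witness $I$ is chosen against the names $\dot S$, $\dot g$ rather than only against their ground-model shadows $S'$, $g'$. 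As written, your argument establishes preservation of a principle strictly weaker than $FRP$.
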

\begin{cor}
$\op{MA}_{\omega_1}$(countably closed, 1)$(S)[S]$ implies $FRP$.
\end{cor}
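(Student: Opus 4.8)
The plan is to chain together the preceding lemmas while tracking which statement holds at each stage of the two-step construction. Write $V[G]$ for the intermediate model obtained by forcing $\op{MA}_{\omega_1}$(countably closed, 1)$(S)$ over the ground model containing the coherent Souslin tree $S$, so that the final $(S)[S]$ model is $V[G][H]$, where $H$ is $S$-generic over $V[G]$. Since the preparatory forcing used only posets that preserve $S$, the tree $S$ is still Souslin---and in particular still ccc---in $V[G]$. The goal is to establish $FRP$ in $V[G][H]$.

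First I would argue that $FRP$ already holds in $V[G]$. The discussion preceding the corollary has observed that $\op{MA}_{\omega_1}$(countably closed, 1)$(S)$ implies Axiom $R$: because every countably closed poset preserves Souslin trees, the $(S)$-restriction discards no countably closed posets, and so Beaudoin's implication from $\op{MA}_{\omega_1}$(countably closed, 1) to Axiom $R$ runs unchanged. Combining this with the lemma that Axiom $R$ implies $FRP$ gives $FRP$ in $V[G]$.

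It remains to transfer $FRP$ from $V[G]$ to the $S$-generic extension $V[G][H]$. Here I would invoke the lemma that $FRP$ is preserved by ccc forcing, together with the fact just noted that $S$ is ccc in $V[G]$; applying this preservation to the forcing $S$ yields $FRP$ in $V[G][H]$, which is precisely the $(S)[S]$ model. There is no genuine obstacle: the one step carrying real content---that the $(S)$-restriction is harmless for countably closed posets---was already dispatched before the corollary, and the remaining input, that $S$ is ccc, is immediate from $S$ being Souslin. The only point requiring a moment's care is to confirm that $S$ really remains Souslin, hence ccc, in $V[G]$, which is exactly the defining feature of forcing with posets that preserve $S$.
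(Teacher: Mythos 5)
Your argument is correct and matches the paper's intended proof exactly: the model of $\op{MA}_{\omega_1}$(countably closed, 1)$(S)$ satisfies Axiom $R$ (since the $(S)$-restriction discards no countably closed posets), hence $FRP$ by the Fuchino et al.\ lemma, and $FRP$ then passes to the $[S]$ extension because $S$ is ccc and $FRP$ is preserved by ccc forcing. Nothing is missing.
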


We cannot, however, drop the one remaining stationary set:
\begin{thm}
  $\op{PFA}(S)[S]$ does not imply first countable, locally separable,
  $\aleph_1$-collectionwise Hausdorff spaces are collectionwise
  Hausdorff.
\end{thm}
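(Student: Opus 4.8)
The plan is to show that the topological statement fails in some model of $\op{PFA}(S)[S]$, and to obtain that failure from a failure of the Fodor-type Reflection Principle. The cited work of Fuchino et al.\ \cite{Fuchino}, \cite{Fuchinoa} in fact gives an \emph{equivalence}: $FRP$ holds if and only if every locally separable, first countable, $\aleph_1$-collectionwise Hausdorff space is collectionwise Hausdorff. Thus it suffices to produce a single model of $\op{PFA}(S)[S]$ in which $FRP$ fails; the reverse direction of the equivalence then supplies the required space — a first countable, locally countable (hence locally separable) ladder-system space on $\aleph_2$, which is automatically $\aleph_1$-collectionwise Hausdorff because its failure of separation occurs only on a set of size $\aleph_2$. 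So the theorem reduces to the consistency of $\op{PFA}(S)[S]$ with $\neg FRP$.

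Next I would remove the last forcing step from consideration. Since the final forcing with $S$ is ccc, and a witness to $\neg FRP$ may be taken to be a non-reflecting stationary set $E\subseteq\{\alpha<\aleph_2:\op{cf}(\alpha)=\omega\}$ together with a ladder system on it, it is enough to arrange $\neg FRP$ already after forcing $\op{PFA}(S)$: ccc forcing preserves cofinalities and stationary sets and cannot shoot a club through the complement of $E$, so the non-reflection of $E$ survives the $S$-forcing. (Note that the cited preservation Lemma runs the other way, preserving $FRP$; here I instead verify directly that the $\neg FRP$ witness persists.) Hence the real target is a model of the proper, $S$-preserving iteration in which $FRP$ fails.

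To build it, I would start over a ground model carrying a Laver-indestructible supercompact $\kappa$ and a coherent Souslin tree $S$, and run the usual $\op{PFA}(S)$ iteration of length $\kappa$, so that $\kappa=\aleph_2$ in the extension; the stationary $E$ cannot be present in the ground model, since supercompactness reflects stationary sets, so it is instead created during the iteration. The decisive observation is that full $\op{PFA}$ reflects every such $E$ by applying $\op{MA}_{\omega_1}$ to a proper reflecting poset, whereas $\op{PFA}(S)$ may not: the natural poset that realizes an instance of $FRP$ at $E$ need not preserve $S$. The construction I would use couples $E$ (with its ladder system) to $S$, so that any proper poset reflecting $E$ is forced to add an uncountable branch to $S$ or otherwise specialize it, and hence lies outside the class to which $\op{PFA}(S)$ applies. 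Tracking this through the bookkeeping, no iterand reflects $E$, so $E$ remains non-reflecting; forcing with $S$ then yields $\op{PFA}(S)[S]+\neg FRP$, and the equivalence of the first paragraph produces the desired space.

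The main obstacle is precisely this preservation/coupling step: proving that reflecting the chosen $E$ (equivalently, realizing $FRP$ at it) cannot be accomplished by an $S$-preserving proper poset — that is, that every such reflection kills $S$ — so that the length-$\kappa$ iteration never destroys the witness. Establishing that $E$ and $S$ can be entangled in this way, while simultaneously confirming that the iteration still meets \emph{all} $S$-preserving proper posets and therefore genuinely forces $\op{PFA}(S)$, is the technical heart of the argument. By comparison, the first countability, local separability, and $\aleph_1$-collectionwise Hausdorffness of the resulting space, together with the ccc-preservation of the non-reflection of $E$ across the $S$-forcing, should be routine.
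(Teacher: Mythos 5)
Your reduction is headed in the right direction --- produce one model of $\op{PFA}(S)[S]$ containing a non-reflecting stationary $E\subseteq\{\alpha<\omega_2:\op{cf}(\alpha)=\omega\}$, take the associated ladder system space, and check that its relevant properties survive the final ccc forcing with $S$ --- but the proposal leaves the decisive step unproved. The entire content of the theorem is the consistency of the $\op{PFA}(S)$ iteration with such an $E$, and you explicitly defer exactly that (``the technical heart'') to an unestablished ``coupling'' of $E$ with $S$. Worse, the coupling rests on a claim that cannot hold as stated: you want every proper poset reflecting $E$ to destroy $S$, but countably closed posets preserve Souslin trees, and, e.g., a collapse of $\omega_2$ to $\omega_1$ is countably closed, proper, $S$-preserving, and makes reflection of $E$ degenerate. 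The actual argument, due to Beaudoin and (independently) Magidor, is that the standard $\op{PFA}$ iteration itself can be arranged to preserve a non-reflecting stationary $E$ --- the point being that applying the forcing axiom to only $\aleph_1$ many dense sets never actually threads a club through the complement of $E$ below a fixed reflection point --- and restricting the iterands to $S$-preserving posets only makes this easier. The intended proof simply cites this, cites Fleissner for the fact that such an $E$ yields a first countable, locally separable, $\aleph_1$-collectionwise Hausdorff, non-collectionwise-Hausdorff ladder system space, and then verifies preservation under the final forcing with $S$: first countability and local separability are basis properties; ``not collectionwise Hausdorff'' persists because ccc forcing cannot destroy the stationarity of $E$; and $\aleph_1$-collectionwise Hausdorffness persists because any $\aleph_1$-sized subset of a ground model set in a ccc extension is covered by a ground model set of size $\aleph_1$, whose ground model separation restricts.

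Two smaller points. First, the detour through the full $FRP$ equivalence is unnecessary overhead: the theorem asks for a space, and the ladder system space is exactly the witness one extracts anyway, so invoking the reverse direction of the Fuchino et al.\ equivalence (a citation the paper does not make and does not need) adds nothing. Second, your assertion that the space is $\aleph_1$-collectionwise Hausdorff ``because its failure of separation occurs only on a set of size $\aleph_2$'' is not an argument; the standard proof uses that $E\cap\beta$ is non-stationary in every $\beta<\omega_2$, so any $\aleph_1$ points of $E$ lie below some $\beta$ and can be separated recursively along a club missing $E\cap\beta$. That part is indeed routine, but the non-reflection consistency result is not, and without it the proof does not go through.
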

\begin{proof}
  Beaudoin notes that he and M. Magidor have independently shown that
  $\op{PFA}$ is consistent with the existence of a non-reflecting
  stationary $E\subseteq\{\alpha<\omega_2:cf(\alpha)=\omega\}$.  Such
  a set is well-known to yield a ladder system space which is first
  countable, locally separable, $\aleph_1$-collectionwise Hausdorff
  but not collectionwise Hausdorff \cite{Fleissner1978}.  It thus only
  remains to show that such a space is preserved by the adjunction of
  a Souslin tree.  The first two properties are ``basis properties''
  and are clearly preserved.  For the space to become collectionwise
  Hausdorff, the stationarity of $E$ would have to be destroyed, which
  countable chain condition forcing can't do.  It remains to show that $\aleph_1$-collectionwise Hausdorffness is preserved.  The reason is that, by a standard argument, every subset $Y$ of size $\aleph_1$ of a ground model set $X$ in a c.c.c. extension is included in a ground model subset $Z$ of $X$ of size $\aleph_1$.  The ground model separation of $Z$ then restricts to a separation of $Y$.
\end{proof}

Note, however, that we have not proved that $\op{PFA}(S)[S]$ does not imply
locally compact, perfectly normal spaces are collectionwise
Hausdorff. We conjecture that this can be accomplished by proving that
a finite condition variant of the partial order Shelah
uses in \cite{Shelah1977} to force the ladder system space mentioned
above to be normal is proper and preserves $S$. Ladder system
spaces are locally compact Moore spaces and hence
have closed sets $G_\delta$, so that would suffice.

In addition to needing that first countable, hereditarily Lindel\"of,
regular spaces are hereditarily separable, and that locally compact,
perfectly normal spaces are collectionwise Hausdorff, the proof in
\cite{Larson} of the consistency of locally compact, perfectly normal
spaces being paracompact needed:

\begin{lem}[~\cite{Todorcevic},\cite{Todorcevic2008}]
$\op{PFA}(S)[S]$ implies $\boldsigma$.
\end{lem}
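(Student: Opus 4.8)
The plan is to use the standard two-step analysis of $\op{PFA}(S)[S]$ models. One works first in the intermediate extension $V[G]$ obtained by forcing $\op{PFA}(S)$, in which the coherent Souslin tree $S$ still lives, and then passes to the final model $V[G][g]$ by forcing with $S$. To verify a given instance of $\boldsigma$ in $V[G][g]$, I would reformulate the configuration that $\boldsigma$ asserts as a statement about the object added by $S$, so that the witness required in $V[G][g]$ can be read off from the generic branch $g$ together with data already present in $V[G]$.

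The combinatorial heart of the argument is carried out in $V[G]$. There one designs a proper poset $P$, with $S$ as a parameter, whose generic produces the structure demanded by the instance of $\boldsigma$, and $\op{PFA}(S)$ is then applied to the family of $\aleph_1$ dense subsets of $P$ that encode its requirements. The crucial constraint is that $P$ must preserve $S$; granting this, the filter obtained in $V[G]$ is not disturbed by the subsequent forcing with $S$, and interpreting it along $g$ yields the object witnessing $\boldsigma$ in $V[G][g]$. An alternative, should it be available, would be to derive $\boldsigma$ from consequences of $\op{PFA}(S)[S]$ already in hand --- for instance from $\op{MA}_{\omega_1}(S)[S]$ or from $\aleph_1$-collectionwise Hausdorffness --- thereby reducing matters to an implication provable in $ZFC$ augmented by those consequences.

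I expect the preservation of the coherent Souslin tree to be the main obstacle. This is exactly the point at which the \emph{coherence} of $S$ is needed: below any condition one must amalgamate conditions uniformly across the levels of $S$ so as to show that $P$ introduces no uncountable antichain into $S$, i.e.\ that $S$ remains Souslin in $V[G]^{P}$. Once this $S$-preservation is in place, matching the genericity furnished by $\op{PFA}(S)$ in $V[G]$ with the branch added in the final step --- and checking that the witness is interpreted correctly after forcing with $S$ --- is delicate but essentially routine bookkeeping. The cited work of Todorcevic supplies precisely such an $S$-preserving construction.
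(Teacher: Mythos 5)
What you have written is a template for how consequences of $\op{PFA}(S)[S]$ are typically established, not a proof of this lemma. Every step that carries mathematical weight is a placeholder: you never specify the poset $P$, never say which dense sets encode ``the requirements'' of an instance of $\boldsigma$, never explain why the generic filter would decompose $Y$ into countably many discrete pieces, and you explicitly \emph{grant} the preservation of $S$ --- the point you yourself identify as the main obstacle --- rather than proving it. One could substitute almost any other consequence of $\op{PFA}(S)[S]$ for $\boldsigma$ in your text without changing a word of substance, which is the clearest sign that no proof is present. There is also a structural difficulty you pass over: $\boldsigma$ quantifies over all compact countably tight spaces and all suitable families $\mc{V}$ in the final model $V[G][g]$, so one cannot simply fix ``an instance'' in $V[G]$ and force a witness for it; the reduction of an arbitrary instance in $V[G][g]$ to data in $V[G]$ on which $\op{PFA}(S)$ can act is itself a substantial part of the argument.

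The gap is not one of missing routine detail; the known route to this lemma is genuinely long, and the paper does not prove it but records its provenance. Todorcevic's argument proceeds by first establishing that $\op{PFA}(S)[S]$ implies compact countably tight spaces are sequential --- described in the paper as the hardest part, only sketched in the 2008 Erice lectures --- and then proving a version of $\boldsigma$ restricted to compact sequential spaces; a complete proof avoiding the sequentiality step exists only in the combination of two later papers (\cite{LarsonErice} plus \cite{Fischer}). Your suggested alternative of deriving $\boldsigma$ from consequences already in hand, such as $\op{MA}_{\omega_1}(S)[S]$ or $\aleph_1$-collectionwise Hausdorffness, is not known to work and is not what any of the cited sources do. An honest treatment of this statement either cites it as the paper does or reproduces the sequentiality and $\sigma$-discreteness arguments in full; the present proposal does neither.
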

\begin{defn}
  \textbf{Balogh's $\boldsigma$} is the assertion that if $Y$ is a
  subset of size $\aleph_1$ of a compact, countably tight space $X$,
  and there is a family $\mc{V}$ of $\aleph_1$ open sets covering $Y$
  such that for every $V \in \mc{V}$ there is an open $U_V\subseteq X$
  such that $\overline{V}\subseteq U_V$ and $U_V\cap Y$ is countable,
  then $Y$ is $\sigma$-discrete.
\end{defn}

The status of Todorcevic's proof is as follows.  He announced the
result in a seminar in Toronto in 2002 and in a lecture in Prague in
2006.  He sketched the proof of the hardest part --- that $\op{PFA}(S)[S]$
implies that compact, countably tight spaces are sequential --- in his
lectures in Erice in 2008 \cite{Todorcevic2008}. He sketched a proof
of a weaker version of Balogh's $\boldsigma$ restricted to compact
sequential spaces in notes in 2002 \cite{Todorcevic}.   A proof that avoids the necessity for proving compact
countably tight spaces are sequential now exists in the union of \cite{LarsonErice} plus \cite{Fischer}.


\section{$\op{MA}_{\omega_1}(S)[S]$ does not imply there are no first
countable $S$-spaces}

The following material deals with a question analogous to what we have
considered so far: does a result proved to hold in a particular model
of $\op{MA}_{\omega_1}(S)[S]$ actually follow from $\op{MA}_{\omega_1}(S)[S]$?

A key unresolved question is \textit{whether $\mathrm{PFA}(S)[S]$
  implies there are no first countable $S$-spaces}. I had incorrectly
claimed this at a couple of conferences in 2006. If this is true, it
would follow that PFA$(S)[S]$ implies there are no first countable,
hereditarily normal, separable Dowker spaces. This is because of the
following proposition from~\cite{Larson}:

\begin{prop}
  {\rm MA}$_{\omega_1}\!(S)[S]$ implies first countable
  hereditarily normal spaces satisfying the countable chain condition
  are hereditarily separable.
\end{prop}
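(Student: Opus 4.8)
The plan is to argue the contrapositive through left-separated sets. Recall that a space is hereditarily separable exactly when it contains no uncountable left-separated subspace, so it suffices to show that under $\mathrm{MA}_{\omega_1}(S)[S]$ a first countable, hereditarily normal, ccc space $X$ admits no left-separated $\{x_\alpha:\alpha<\omega_1\}$. First I would normalize such a hypothetical sequence: using first countability, fix a decreasing countable base at each $x_\alpha$ and select from it an open $U_\alpha\ni x_\alpha$ with $\overline{U_\alpha}\cap\{x_\beta:\beta<\alpha\}=\varnothing$ (shrinking by regularity so that even the closure misses the earlier points), then press down so that the $U_\alpha$ are chosen by a uniform rule and the relevant finite traces stabilize on a stationary, and hence, after re-enumeration, on the whole index set. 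The one structural dividend I record at once is point-countability along the sequence: $x_\beta\in U_\alpha$ forces $\beta\ge\alpha$, so each $x_\beta$ lies in only countably many of the $U_\alpha$.

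It is worth stressing what the argument may not do. Since it is open whether $\mathrm{PFA}(S)[S]$ --- let alone $\mathrm{MA}_{\omega_1}(S)[S]$ --- kills first countable $S$-spaces, I cannot expect $X$ to be hereditarily Lindel\"of, so the earlier Lemma on first countable hereditarily Lindel\"of regular spaces is not available; hereditary separability must be produced directly, and the hereditary normality hypothesis, so far unused, has to carry the weight.

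For the construction I follow the usual $(S)[S]$ template. I work in the intermediate model $W$ in which $\mathrm{MA}_{\omega_1}(S)$ holds; there the space, the enumeration $\langle x_\alpha\rangle$, and the neighborhoods $\langle U_\alpha\rangle$ are given by $S$-names, and since $S$ is ccc and coherent I first reflect these names, thinning to an uncountable set on which the tree nodes deciding the pertinent membership and closure relations cohere and on which the witnessing data is uniform. Over $W$ I then define a forcing $P$ whose conditions are finite approximations to an uncountable subfamily of the $U_\alpha$ that is to be made cellular, the finitary separations being carried out by hereditary normality. A filter meeting $\aleph_1$ dense sets, supplied by $\mathrm{MA}_{\omega_1}(S)$ and then transported through the $S$-generic branch, yields in $W[g]$ an uncountable $A\subseteq\omega_1$ together with pairwise disjoint open $V_\alpha\ni x_\alpha$ for $\alpha\in A$ --- an uncountable cellular family contradicting that $X$ is ccc in the final model. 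There is no circularity here: the tool is the ccc-ness of the \emph{combinatorial} poset $P$ in $W$, while the contradiction is with the ccc-ness of the \emph{space} $X$ in $W[g]$, and these are decided in different models. The role of hereditary normality is precisely to separate, by disjoint open sets, the finitely many closed ``heads'' $\overline{\{x_\beta:\beta<\alpha\}}$ from the finitely many chosen points appearing in any pair of conditions, so that conditions amalgamate; the role of coherence of $S$ is to keep that amalgamation compatible with the tree.

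The main obstacle is exactly this amalgamation: showing that $P$ is ccc in $W$ (equivalently, that any two conditions sharing a common root are compatible) and that $P$ preserves $S$. These two requirements are where the unused hypotheses are consumed --- hereditary normality supplies the disjoint open sets separating the finite closed configurations attached to a pair of conditions, while coherence of the Souslin tree is what lets these separations be performed along the tree without introducing an uncountable antichain, so that $S$ remains Souslin. The delicate point is arranging the reflection of the $S$-names to be uniform enough that the normality separations can be made simultaneously and coherently across all conditions; granting that, the pressing-down normalizations, the density computations, and the final transfer through $S$ are routine.
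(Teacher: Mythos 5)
The paper does not actually prove this proposition; it is quoted from \cite{Larson} (and ultimately rests on the Larson--Todorcevic work on Kat\v etov's problem), so there is no in-paper argument to compare against. Judged on its own, your proposal reduces correctly to excluding an uncountable left-separated subspace and sets up the standard $\mathrm{MA}_{\omega_1}(S)[S]$ template (work in the intermediate model with $S$-names, find a ccc $S$-preserving poset, push the filter through the generic branch), but the entire mathematical content of such a proof is the verification that a specific poset is ccc and preserves $S$, and that is precisely what you do not supply. Worse, the poset you describe is self-defeating. If conditions are allowed to shrink the neighborhoods and hereditary normality is used to separate the finitely many points occurring in any two conditions, then every pair of conditions with disjoint domains is compatible, but the generic object is then not a cellular family: each coordinate is the intersection of the infinitely many successive shrinkings and need not be open, so no contradiction with ccc is obtained. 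If instead the neighborhoods are pinned to the fixed countable bases at the $x_\alpha$ (the only way the generic yields actual open sets), then compatibility of two conditions requires their \emph{fixed} basic neighborhoods to be disjoint, which Hausdorff/normal separation of finitely many points does not give; an uncountable family of singleton conditions whose fixed neighborhoods pairwise meet is an uncountable antichain, and ruling that out is the whole problem, not a routine amalgamation.

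This is also not where hereditary normality enters the known argument. The actual proof does not force a cellular family outright; it runs a dichotomy in the style of Todorcevic's ccc-partition technology (adapted so that the relevant posets preserve the coherent Souslin tree): either one extracts an uncountable subfamily of the left-separated sequence whose fixed basic neighborhoods are pairwise disjoint, contradicting the countable chain condition of $X$, or one extracts two uncountable mutually separated subsets of the sequence that cannot be put into disjoint open sets, contradicting hereditary normality. Your sketch has only the first horn and tries to make hereditary normality serve as an amalgamation lemma, which it cannot do. Finally, preservation of $S$ is asserted from ``coherence'' with no argument; in the genuine proof this verification is a substantial piece of work in its own right, not a consequence of the amalgamation. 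As it stands the proposal is a plausible-looking frame around the theorem rather than a proof of it.
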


We shall now show that MA$_{\omega_1}\!(S)[S]$ is not sufficient to
prove there are no first countable
$S$-spaces. 
\begin{thm}
\label{ccc_thm}
Assume $2^{\aleph_1} = \aleph_2$. There is a c.c.c.\ poset $Q$ of size
$\aleph_2$ such that after forcing with $Q$ and then any c.c.c.\ poset $P$, there
is a first countable perfectly normal hereditarily separable space
which is not Lindel\"{o}f.
\end{thm}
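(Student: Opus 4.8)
The plan is to build $Q$ as a finite-support c.c.c.\ iteration (equivalently, a suitably amalgamated product) of length $\omega_2$ that generically adds a first countable, locally countable topology $\tau$ on the underlying set $\omega_1$; the hypothesis $2^{\aleph_1}=\aleph_2$ enters only to enumerate in order type $\omega_2$ all the tasks the iteration must accomplish and to keep $|Q|=\aleph_2$. A condition of $Q$ will be a finite approximation to a sequence $\langle\mathcal{B}_\alpha:\alpha<\omega_1\rangle$ of countable decreasing neighbourhood bases, where each basic neighbourhood of $\alpha$ is a subset of $\alpha+1$ having $\alpha$ as its top point. Forcing the basic neighbourhoods of $\alpha$ to lie below $\alpha+1$ makes every initial segment $[0,\alpha)$ open, so $\tau$ is \emph{right separated} in type $\omega_1$; a $\Delta$-system argument on the finite conditions gives that $Q$ is c.c.c. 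I would arrange the bookkeeping so that, along the iteration, I treat every (code for a) pair of disjoint $\tau$-closed sets---separating them to force normality and writing each closed set as a countable intersection of open sets to force perfectness---and every (code for an) uncountable subset of $\omega_1$, equipping it with a countable subset that genericity will make dense.

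Two of the four properties are then immediate and, crucially, absolute under any further c.c.c.\ forcing $P$. First countability holds because each point retains its base $\mathcal{B}_\alpha\in V^Q$; moreover, for a first countable topology whose basic open sets are ground-model objects, the relation $\alpha\in\overline{D}$ for a ground-model $D\subseteq\omega_1$ is the arithmetic assertion that every member of $\mathcal{B}_\alpha$ meets $D$, hence is absolute between $V^Q$ and $V^{Q*P}$. Non-Lindel\"ofness is witnessed by the open cover $\{[0,\alpha):\alpha<\omega_1\}$: the union of countably many initial segments of $\omega_1$ is bounded, and since c.c.c.\ forcing preserves $\omega_1$ and the ordinals, this cover has no countable subcover in $V^{Q*P}$ either.

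For hereditary separability and perfect normality I would pass to characterizations that survive reflection. I use the standard fact, already invoked in this paper, that in a c.c.c.\ extension every subset of $\omega_1$ of size $\aleph_1$ is contained in a size-$\aleph_1$ set lying in $V^Q$. Thus any potential failure in $V^{Q*P}$---an uncountable left-separated subspace witnessing non-HS, or a disjoint closed pair that cannot be separated witnessing non-normality---is captured by a set $W\in V^Q$ of size $\aleph_1$, together with the ground-model basic neighbourhoods attached to it. Because closures of ground-model sets at ordinal points are absolute, $W$ already codes in $V^Q$ an uncountable left-separated configuration, respectively an unseparated closed pair. The bookkeeping built into $Q$ will have met exactly these configurations---every uncountable subset was given a countable dense set whose density is preserved by absoluteness of closures, and every disjoint closed pair was generically separated---so no such $W$ exists. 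Perfectness transfers from $V^Q$ to $V^{Q*P}$ by the same absoluteness, the witnessing $\omega$-sequence of open sets being a ground-model object.

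The hard part will be the interaction of indestructible hereditary separability with perfect normality. I must arrange, generically and simultaneously, that every uncountable subset of $\omega_1$ acquires a persistently dense countable subset \emph{and} that disjoint closed sets are separated, while keeping all of this reflectable. The delicate point is that the left-separating neighbourhoods, or the separating open sets, exhibited in $V^{Q*P}$ are a priori new objects, so the reduction to a ground-model configuration $W$ must recover not merely the underlying points---which are ordinals and hence already in $V^Q$---but enough of the witnessing open sets to contradict genericity. Designing the single poset $Q$ so that it anticipates all such witnesses across \emph{every} c.c.c.\ $P$, rather than only those present in $V^Q$, is precisely what forces the finite-approximation format together with the absoluteness of closures at ordinal points; verifying that these two features really do close the argument is the technical heart of the proof.
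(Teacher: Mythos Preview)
Your plan diverges substantially from the paper's argument and, in one place, from what the paper actually proves. The paper does not build $Q$ from scratch. It quotes a theorem of Soukup: under $2^{\aleph_1}=\aleph_2$ there is a c.c.c.\ $Q$ of size $\aleph_2$ such that after $Q$ and then any further c.c.c.\ $P$, there is a first countable $0$-dimensional space on $\omega_1$ in which \emph{every open set is countable or cocountable}. Soukup already shows this space is hereditarily separable and not Lindel\"of, and that these features are c.c.c.-indestructible. Perfectness then comes from a one-line observation of Roitman (countable subsets of locally countable spaces are $G_\delta$, and cocountable sets are trivially $G_\delta$). Normality is obtained \emph{not} by anything built into $Q$, but from van~Douwen's lemma: if $\mathfrak{b}>\aleph_1$, then in a first countable regular space of size $\aleph_1$ any countable closed set can be separated from a disjoint closed set. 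Since in Soukup's space one of any two disjoint closed sets is countable, this suffices---but only in extensions with $\mathfrak{b}>\aleph_1$. The paper explicitly verifies the properties ``in our model,'' i.e.\ in a model of $\op{MA}_{\omega_1}(S)[S]$, where $\mathfrak{b}>\aleph_1$ is known to hold; that is all the application requires.

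Your proposal, by contrast, tries to force \emph{indestructible} perfect normality by bookkeeping all (names for) disjoint $\tau$-closed pairs along the iteration. This is where the gap lies. Your reflection step---``a disjoint closed pair that cannot be separated is captured by a set $W\in V^Q$''---does not go through. Given disjoint closed $H,K$ in $V^{Q*P}$, c.c.c.\ lets you find uncountable $H_0\subseteq H$, $K_0\subseteq K$ in $V^Q$, but you have no control over $\overline{H_0}$ and $\overline{K_0}$ in $V^Q$: they need not be disjoint, and even if they are, separability of that pair in $V^Q$ says nothing about separability of $H,K$ in $V^{Q*P}$, since inseparability is a $\Pi_1$ assertion over a \emph{larger} family of open sets. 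Unlike the HS argument (where a left-separated sequence uses only ground-model basic neighbourhoods and really does reflect), a witness to non-normality is not a single configuration of ground-model objects that can be trapped by your bookkeeping over $V^Q$. The paper avoids this entirely by exploiting the very special structure ``every open set is countable or cocountable'' together with $\mathfrak{b}>\aleph_1$ in the final model; you should adopt that route rather than attempting to make normality itself indestructible.
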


This does it, since one can start e.g., with $L$ and in the $Q$
extension let $P = P_1\ast (\dot{P}_2\times S)\dot{}$, where $P_1$ is
the forcing for adding a Cohen real, which forces a coherent Souslin
tree $S$ \cite{Todorcevic1987}, and $\dot{P}_2$ forces
MA$_{\omega_1}\!(S)$. Since
$\dot{P}_2$ preserves $S$, $\dot{P}_2 \times S$ and hence $P$ is
c.c.c.. In order to force MA$_{\omega_1}\!(S)$ we need $2^{\aleph_1}
\leq\aleph_2$, but $Q\ast P_1$ preserves this. \qed

To see that Theorem~\ref{ccc_thm} holds we need just to assemble
results of others.
\begin{lem}[~\cite{Larson1999}]
{\rm MA}$_{\omega_1}\!(S)[S]$ implies $\mathfrak{b}>\aleph_1$.
\end{lem}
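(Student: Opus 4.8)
The plan is to verify $\mathfrak{b}>\aleph_1$ directly in the final model. Write $V[G]$ for the intermediate model in which $\op{MA}_{\omega_1}(S)$ holds, and $V[G][b]$ for the extension by the generic branch $b$ through $S$. Given a family $\{f_\alpha:\alpha<\omega_1\}\subseteq\omega^\omega$ in $V[G][b]$, each $f_\alpha$ is $\dot f_\alpha[b]$ for some $S$-name $\dot f_\alpha\in V[G]$. Since any witness is allowed to depend on both the $\op{MA}$-filter (which lives in $V[G]$) and on $b$, I would aim to construct in $V[G]$ a single $S$-name $\dot g$ for an element of $\omega^\omega$ with $\Vdash_S\dot f_\alpha\le^*\dot g$ for every $\alpha<\omega_1$; forcing with $S$ then makes $g=\dot g[b]$ a common $\le^*$-bound for the whole family, so no $\aleph_1$-sized family is unbounded and $\mathfrak{b}>\aleph_1$. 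Note that the bound must genuinely be a name rather than a ground-model real, since $S$ need not be $\omega^\omega$-bounding over $V[G]$.

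The forcing $\mathbb{Q}\in V[G]$ to which I would apply $\op{MA}_{\omega_1}(S)$ builds $\dot g$ by finite approximations. A condition is a pair $(a,\tau)$, where $a\in[\omega_1]^{<\omega}$ is a finite set of ``promises'' and $\tau$ is a finite consistent set of triples $(n,u,m)$, with $n<\omega$, $u\in S$, and $m<\omega$, to be read as the commitment ``$u\Vdash_S\dot g(n)=m$'', the nodes $u$ occurring with a fixed $n$ forming an antichain in $S$. A stronger condition enlarges both $a$ and $\tau$, subject to the domination clause: whenever a new triple $(n,u,m)$ is added at a coordinate $n$ that is active for some $\alpha\in a$, the node $u$ must decide $\dot f_\alpha(n)$ and one must have $m\ge$ the value $u$ forces for $\dot f_\alpha(n)$. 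The name is recovered as $\dot g=\bigcup\{\tau:(a,\tau)\in H\}$ from the generic filter $H$. Crucially, a condition refers to only finitely many nodes of $S$, so $\mathbb{Q}$ encodes no level-slice of the tree and is a candidate for an $S$-preserving poset rather than a disguised two-step iteration through $S$.

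First I would check that $\mathbb{Q}$ preserves $S$, i.e.\ that $\mathbb{Q}\times S$ is ccc, so that $\op{MA}_{\omega_1}(S)$ legitimately applies. Given $\aleph_1$ many conditions paired with nodes of $S$, I would thin to a $\Delta$-system in the finite sets of nodes named by the $\tau$'s, and then amalgamate two members below the root using the countable chain condition and coherence of $S$, arranging that the merged antichains stay antichains and that the finitely many domination inequalities are jointly realizable. I would then list $\aleph_1$ dense sets: $D_\alpha=\{(a,\tau):\alpha\in a\}$; for each $n<\omega$ and each $v\in S$, a set $M_{n,v}$ forcing some triple $(n,u,m)\in\tau$ with $u$ compatible with $v$, so that the $n$-th antichain becomes predense and $\dot g(n)$ is fully decided; and $E_{\alpha,k}$ extending the domination of $\dot f_\alpha$ to coordinates past level $k$. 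A filter meeting all of these produces a total name $\dot g$ with $\Vdash_S\dot f_\alpha\le^*\dot g$, after which forcing with $S$ delivers the desired bound.

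The hard part will be the preservation of $S$, that is the amalgamation step in the proof that $\mathbb{Q}\times S$ is ccc: one must merge two finite node-labelings sharing a root while simultaneously keeping every coordinate's antichain genuinely incompatible in $S$ and keeping the associated domination numbers mutually consistent, and this is exactly where the countable chain condition and coherence of $S$ must be exploited. A secondary, more routine point is the density of the $M_{n,v}$ and the availability of dominating values, which rests on each $\dot f_\alpha(n)$ being decided below every node of $S$, so that large enough $m$ always exists; and one must confirm that committing $\dot g$ to dominate each active $\dot f_\alpha$ coordinatewise past its entry stage integrates, on the actual branch $b$, to $\le^*$-domination of $f_\alpha$. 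Once this bookkeeping is in place, the conclusion $\mathfrak{b}>\aleph_1$ is immediate.
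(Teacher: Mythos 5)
Your approach contains a genuine gap, and it starts from a false premise. You assert that ``the bound must genuinely be a name rather than a ground-model real, since $S$ need not be $\omega^\omega$-bounding over $V[G]$.'' In fact $S$ is still a Souslin tree in $V[G]$ (that is the whole point of the $(S)$ restriction), and forcing with a Souslin tree is $\omega$-distributive: for each name $\dot f_\alpha$ and each $n$, a maximal antichain of nodes deciding $\dot f_\alpha(n)$ is countable, hence bounded below some level, so every node above some countable level $\delta_\alpha$ decides $\dot f_\alpha(n)$ for all $n$. Thus $S$ adds no new reals at all, and each name $\dot f_\alpha$ is read off as a family $\{f_\alpha^s: s\in S_{\delta_\alpha}\}$ of at most $\aleph_1$ ground-model reals. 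The argument in the cited source is then short: Hechler forcing is $\sigma$-centered, a $\sigma$-centered poset times a ccc poset is ccc, so Hechler forcing preserves $S$ and $\op{MA}_{\omega_1}(S)$ applies to it, yielding a single $g\in V[G]$ with $f_\alpha^s\le^* g$ for all $\alpha,s$; since the generic branch passes through some $s\in S_{\delta_\alpha}$, this $g$ dominates every $\dot f_\alpha[b]$ in $V[G][b]$. Your elaborate name-building machinery is solving a problem that is not there.

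Worse, the machinery does not work as written. Your own criterion for applying $\op{MA}_{\omega_1}(S)$ is that $\mathbb{Q}\times S$ be ccc, and it is not. Pick one node $u_\delta$ from each level of $S$ and consider the conditions $q_\delta=\bigl((\emptyset,\{(0,u_\delta,0)\}),u_\delta\bigr)\in\mathbb{Q}\times S$. If $u_\delta$ and $u_\gamma$ are comparable, the $\mathbb{Q}$-coordinates are incompatible, since your definition requires the nodes attached to coordinate $0$ to form an antichain of $S$ and any common extension would have to contain both triples; if $u_\delta$ and $u_\gamma$ are incomparable, the $S$-coordinates are incompatible. So $\{q_\delta:\delta<\omega_1\}$ is an uncountable antichain, and the ``hard part'' you defer (the amalgamation in the ccc proof for $\mathbb{Q}\times S$) is not merely hard but impossible for this poset. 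This is a structural obstruction: any scheme that insists on pairwise incomparable nodes deciding a coordinate of $\dot g$ collides head-on with pairing conditions against the very nodes they mention. The fix is not to repair $\mathbb{Q}$ but to drop it, using the distributivity of $S$ as above. (For the record, the paper itself offers no proof of this lemma; it simply cites Larson, whose argument is the one sketched in the first paragraph.)
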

\begin{lem}[~\cite{douwen1984}]
\label{doscounter}
$\mathfrak{b}>\aleph_1$ implies that in a first countable regular space of size $\aleph_1$, two disjoint closed sets, one of which is countable, have disjoint open sets around them.
\end{lem}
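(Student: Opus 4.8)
The plan is to reduce the separation problem to a domination question in $(\omega^\omega,\leq^*)$ and then apply $\mathfrak{b}>\aleph_1$. Write the space as $X$, let $C=\{c_n:n<\omega\}$ be the countable closed set and $D$ the other closed set, so that $C\cap D=\emptyset$ and $|D|\le\aleph_1$. First I would fix, for each $n$, a decreasing neighbourhood base $\{B_n^k:k<\omega\}$ at $c_n$; since $c_n\notin D$ and $D$ is closed, regularity lets me arrange $\overline{B_n^k}\cap D=\emptyset$ for every $k$. Symmetrically, for each $d\in D$, since $C$ is closed and $d\notin C$, regularity gives an open $W_d\ni d$ with $\overline{W_d}\cap C=\emptyset$. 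The point of the latter is that $c_n\notin\overline{W_d}$, so $X\setminus\overline{W_d}$ is a neighbourhood of $c_n$ and hence contains some $B_n^k$; thus the function $g_d\in\omega^\omega$ given by $g_d(n)=\min\{k:B_n^k\cap W_d=\emptyset\}$ is well defined.

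Next I would invoke the cardinal hypothesis. The family $\{g_d:d\in D\}$ has size at most $\aleph_1<\mathfrak{b}$, so it is $\leq^*$-bounded: there is a single $f\in\omega^\omega$ with $g_d\leq^* f$ for every $d\in D$. Put $U=\bigcup_n B_n^{f(n)}$; this is an open set containing $C$, and the separating pair will be $U$ and $V=X\setminus\overline{U}$. It then remains only to check that $\overline{U}\cap D=\emptyset$, so that $V$ is an open set containing $D$ and disjoint from $U$.

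To verify this, fix $d\in D$ and let $N$ be such that $g_d(n)\le f(n)$ for all $n\ge N$. For these $n$ we have $B_n^{f(n)}\subseteq B_n^{g_d(n)}$, which is disjoint from $W_d$ by the definition of $g_d$; hence $W_d$ meets no $B_n^{f(n)}$ with $n\ge N$. The finitely many remaining indices $n<N$ are handled by the first arrangement: since $\overline{B_n^{f(n)}}\cap D=\emptyset$ we have $d\notin\overline{B_n^{f(n)}}$, so $X\setminus\overline{B_n^{f(n)}}$ is a neighbourhood of $d$ missing $B_n^{f(n)}$. Intersecting $W_d$ with these finitely many neighbourhoods yields an open $O\ni d$ disjoint from every $B_n^{f(n)}$, whence $O\cap U=\emptyset$ and $d\notin\overline{U}$, as required.

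The one place calling for care — and what I expect to be the crux — is that $\mathfrak{b}>\aleph_1$ delivers only eventual domination $g_d\leq^* f$, not everywhere domination, so a single basic neighbourhood $W_d$ need not avoid all of $U$ at once; the finite correction in the third paragraph is exactly what absorbs the initial segment on which $f$ fails to dominate $g_d$. The other point to keep honest is the bookkeeping of hypotheses: the definition of $g_d$ requires $C$ to be closed (to pull $\overline{W_d}$ off $C$), while the countability of $C$ is what makes each $g_d$ a genuine member of $\omega^\omega$ and thereby brings the whole problem within reach of $\mathfrak{b}$.
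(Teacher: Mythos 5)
Your argument is correct and is essentially the standard van Douwen proof of this fact: encode each point of the uncountable closed set as a function recording how far one must shrink the basic neighbourhoods of the countable set to avoid it, bound the resulting $\aleph_1$-sized family using $\mathfrak{b}>\aleph_1$, and absorb the finite exceptional set via the regularity arrangement $\overline{B_n^k}\cap D=\emptyset$. The paper gives no proof of its own, citing van Douwen directly, so there is nothing further to compare.
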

\begin{lem}[~\cite{Soukup2001}]
  $2^{\aleph_1} = \aleph_2$ implies there is a c.c.c.\ poset $Q$ of
  size $\aleph_2$ such that after forcing with $Q$ and then any c.c.c.\ poset $P$,
  there is a first countable $0$-dimensional space of size $\aleph_1$
  in which every open set is countable or cocountable.
\end{lem}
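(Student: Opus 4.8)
The plan is to separate the construction into a purely combinatorial object on $\omega_1$ together with a \emph{ccc-preservation lemma}, so that the phrase ``after forcing with $Q$ and then any c.c.c.\ poset $P$'' is handled uniformly rather than poset-by-poset. First I would reduce the topological conclusion to a statement about a neighborhood assignment. Fix the underlying set to be $\omega_1$ and describe a candidate space by giving, for each $\alpha<\omega_1$, a $\subseteq$-decreasing sequence of countable sets $U^\alpha_0\supseteq U^\alpha_1\supseteq\cdots$ with $\bigcap_n U^\alpha_n=\{\alpha\}$, to serve as a clopen neighborhood base at $\alpha$; imposing the usual coherence conditions (each $U^\alpha_n$ is a union of basic sets of its members, and its complement is as well) makes the generated topology first countable and $0$-dimensional. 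The point is to reformulate ``every open set is countable or cocountable.'' Since the open sets are exactly the unions of basic neighborhoods, and complementation interchanges ``open/closed'' with ``countable/cocountable,'' this property is equivalent to: \emph{every uncountable $C\subseteq\omega_1$ with uncountable complement has a limit point outside $C$}; equivalently the derived set $C'$ of every uncountable $C$ meets every uncountable set, which I will arrange in the strong form that $C'$ is \emph{cocountable} for every uncountable $C$. This last form is the combinatorial target $\Phi$ for the assignment $\mathcal{U}=\{U^\alpha_n\}$.

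Next I would design $Q$ to force the existence of an assignment satisfying a strengthened, explicitly Ramsey-type property $\Phi^+$ that implies $\Phi$. Conditions are finite approximations to $\mathcal{U}$ together with finite ``promises'' (side conditions recording finitely many pairs $(\alpha,C)$ along which we commit $\alpha$ to become a limit point of $C$), and the construction is driven by a bookkeeping of length $\aleph_2$; here $2^{\aleph_1}=\aleph_2$ is used both to enumerate in order type $\aleph_2$ all the $\aleph_2$-many subsets of $\omega_1$ (and names) that must be met and to keep $|Q|=\aleph_2$. The c.c.c.\ of $Q$ is the routine part: given uncountably many conditions, thin out by a $\Delta$-system lemma so that their finite domains have a common root and their promises cohere, and then check that two such conditions amalgamate into a common extension.

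The main obstacle is indestructibility, and I would isolate it as a ZFC \emph{preservation lemma}: if $\mathcal{U}$ satisfies $\Phi^+$, then in every c.c.c.\ extension the assignment $\mathcal{U}$ still makes the derived set of every uncountable set cocountable. This cannot be proved by merely handling $Q$-names for open sets, because a later c.c.c.\ $P$ genuinely adds new uncountable subsets of $\omega_1$; the property $\Phi^+$ must therefore be chosen so as to be \emph{upward absolute} across c.c.c.\ forcing. The mechanism I would exploit is that a c.c.c.\ poset preserves $\omega_1$ and stationary subsets of $\omega_1$, and that any new uncountable $C\in V[Q][P]$ is, by a standard argument, captured by old countable pieces: one reads off from a name for $C$ an uncountable ground-model set together with a matching of its elements to finite pieces of the generic, to which the partition content of $\Phi^+$ applies and yields a ground-model point forced to be a limit point of $C$ in its complement. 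Verifying that $\Phi^+$ can be phrased in this absolute way \emph{and} that $Q$ really forces it is where essentially all the work lies; the formulation of $\Phi^+$ must be tight enough to be c.c.c.-indestructible yet weak enough to be forced generically under $2^{\aleph_1}=\aleph_2$.

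Finally, assembling: in $V[Q]$ the generic assignment satisfies $\Phi^+$, so by the preservation lemma, after any further c.c.c.\ $P$ the topology generated by $\mathcal{U}$ on $\omega_1$ is a first countable, $0$-dimensional space of size $\aleph_1$ in which, by the reformulation, every open set is countable or cocountable, which is the desired conclusion.
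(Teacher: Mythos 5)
The paper offers no proof of this lemma at all --- it is quoted as a result of Soukup \cite{Soukup2001} --- so your proposal can only be measured against the cited construction, and by that standard it is a plan rather than a proof. Your framing is sound: the reduction of ``every open set is countable or cocountable'' to a derived-set condition on uncountable sets is correct (a failing open set has an uncountable, co-uncountable closed complement, which would have to contain all of its limit points), and you correctly diagnose that indestructibility under an arbitrary later c.c.c.\ $P$ cannot be obtained by bookkeeping names inside $Q$ --- there are proper-class many such $P$ --- but must flow from an upward-absolute combinatorial property of the generic neighborhood assignment. But the lemma's entire mathematical content sits in exactly the two steps you defer: writing down $\Phi^+$, and proving both that $Q$ forces it and that it survives c.c.c.\ forcing. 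You say so yourself (``where essentially all the work lies''), which makes this an honest outline with a genuine gap rather than an argument.

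To see that the gap is not a routine verification, run your own preservation sketch one step further. Let $\dot C$ be a $P$-name and $p$ a condition forcing ``$\dot C$ is an uncountable closed subset of $\omega_1$.'' The trace $C_p=\{\alpha : (\exists q\le p)\, q\Vdash \check\alpha\in\dot C\}$ is an uncountable set of the intermediate model, and your property applies to it; but a limit point $\beta$ of $C_p$ need not be forced by $p$ to be a limit point of $\dot C$: for each $n$ and each $r\le p$ one needs some $q\le r$ forcing a point of $U^\beta_n\setminus\{\beta\}$ into $\dot C$, i.e., $\beta$ must be a limit point of every trace $C_r$ for $r\le p$ simultaneously, and these traces shrink as $r$ strengthens. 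Bridging this requires a partition-type property about uncountable families of conditions (amalgamation of uncountably many finite approximations at once), which is precisely what the unspecified $\Phi^+$ --- and the length-$\aleph_2$ bookkeeping that uses $2^{\aleph_1}=\aleph_2$ --- must be engineered to deliver. Relatedly, the c.c.c.\ of $Q$ is not automatic from a $\Delta$-system root: your conditions carry ``promises'' about uncountable sets, and side conditions of that kind are exactly the ones that can destroy the countable chain condition, so the amalgamation of two root-compatible conditions needs an explicit argument, not a remark that it is routine. Until $\Phi^+$ is written down and these two verifications are carried out, the proposal does not establish the lemma.
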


It just remains to show Soukup's space has the desired properties in
our model. He notes it is hereditarily separable but not Lindel\"{o}f;
by Lemma~\ref{doscounter}, it is hereditarily normal. Without loss of
generality, by passing to a subspace if necessary, we may assume the
space is locally countable. But as Roitman~\cite{Roitman1984} notes on
p.\ 314, \textit{a countable subset of a locally countable space is a
  $G_\delta$}. Since cocountable sets are also $G_\delta$'s we see
that the space is perfectly normal.

\section{A problem of Nyikos}

Next, we deal with a tangentially related problem.
In~\cite{Nyikos1992}, Nyikos raises the question of whether
there is a separable, hereditarily normal, locally compact space of
cardinality $\aleph_1$. He observes that a model in which there are no
$Q$-sets and no locally compact first countable $S$-spaces would have
no such space. In~\cite{Eisworth2003}, such a model is
produced. PFA$(S)[S]$ also implies these two assertions, so it also
implies that there is no such space.  To see this, note that a $Q$-set enables the construction of a locally compact normal space which is not $\aleph_1$-collectionwise Hausdorff, while $\boldsigma$ implies there are no (locally) compact $S$-spaces. \qed

\section{Some Problems}
The referee has asked whether PMEA (the Product Measure Extension Axiom) implies
locally compact perfectly normal spaces are
paracompact, noting that PMEA implies locally compact normal metalindel\"{o}f spaces are
paracompact \cite{D}. I do not know the answer to this. However, the reason PMEA implies
locally compact normal metalindel\"{o}f spaces are paracompact is simply that it implies
normal spaces of character $< 2^{\aleph_0}$ are collectionwise normal, whence one gets
locally compact normal spaces are $\aleph_1$-collectionwise Hausdorff by the usual
Watson reduction \cite{Watson1982}. As noted earlier, that is enough to make locally compact
normal metalindel\"{o}f spaces paracompact. What one would need in addition to
$\aleph_1$-collectionwise Hausdorffness in order to make locally compact perfectly normal
spaces paracompact is the non-existence of compact $L$-spaces plus $\boldsigma$. It
is not known if the former holds under PMEA, although both it and PMEA will hold if
one adds strongly compact many random reals over a model of $\op{MA}_{\omega_1}$. (The latter
is a well-known result of Kunen (see \cite{F}), while the former is in \cite{T}.) The question
of whether $\boldsigma$ holds in this model is a stronger version of the unsolved
problem of whether there are compact $S$-spaces in the model obtained by adjoining
$\aleph_2$ random reals to a model of $\op{MA}_{\omega_1}$. For several years, this was the
preferred approach toward solving Kat\v etov's problem, before $\op{MA}_{\omega_1}(S)[S]$ turned
out to be the way to go \cite{Larson2002}.

The referee also asked whether the Abraham-Todorcevic example of a first countable
$S$-space indestructible under countable chain condition forcing \cite{AT} exists under
$\op{MA}_{\omega_1}(S)[S]$. Indeed, they start with a model of GCH and do a countable chain
condition iteration to construct their example. One can then force with a countable
chain condition poset to get $\op{MA}_{\omega_1}(S)[S]$, so this gives another proof that
$\op{MA}_{\omega_1}(S)[S]$ does not imply there are no first countable $S$-spaces. I conjecture
however that $\op{PFA}(S)[S]$ implies there are no $S$-spaces.

\nocite{*}
\bibliographystyle{acm}
\bibliography{meta}

\newcommand{\noopsort}[1]{}
\begin{thebibliography}{10}

\bibitem{AT}
{\sc Abraham, U., and Todorcevic, S.}
\newblock {M}artin's {A}xiom and first-countable {$S$}- and {$L$}-spaces.
\newblock In {\em Handbook of {S}et-{T}heoretic {T}opology}, K.~Kunen and
  J.~Vaughan, Eds. North-Holland, Amsterdam, 1984, pp.~327--346.

\bibitem{Arhangelskii1972}
{\sc Arhangel'ski\u\i, A.~V.}
\newblock The property of paracompactness in the class of perfectly normal
  locally bicompact spaces.
\newblock {\em Dokl. Akad. Nauk SSSR 203\/} (1972), 1231--1234.

\bibitem{Balogh1983}
{\sc Balogh, Z.~T.}
\newblock Locally nice spaces under {M}artin's axiom.
\newblock {\em Comment. Math. Univ. Carolin. 24}, 1 (1983), 63--87.

\bibitem{Baumgartner1984}
{\sc Baumgartner, J.~E.}
\newblock Applications of the {P}roper {F}orcing {A}xiom.
\newblock In {\em Handbook of {S}et-{T}heoretic {T}opology}, K.~Kunen and
  J.~Vaughan, Eds. North-Holland, Amsterdam, 1984, pp.~913--959.

\bibitem{Beaudoin1987}
{\sc Beaudoin, R.~E.}
\newblock Strong analogues of {M}artin's {A}xiom imply {A}xiom ${R}$.
\newblock {\em J. Symbolic Logic 52\/} (1987), 216--218.

\bibitem{D}
{\sc Daniels, P.}
\newblock Separation in sequential spaces under {PMEA}.
\newblock {\em Fund. Math. 134\/} (1990), 117--123.

\bibitem{douwen1984}
{\sc {\noopsort{Douwen}}van~Douwen, E.~K.}
\newblock The integers and topology.
\newblock In {\em Handbook of {S}et-theoretic {T}opology}, K.~Kunen and J.~E.
  Vaughan, Eds. North-Holland, Amsterdam, 1984, pp.~111--167.

\bibitem{Dow1992}
{\sc Dow, A.}
\newblock Set theory in topology.
\newblock In {\em Recent Progress in General Topology}, M.~Hu\v{s}ek and J.~van
  Mill, Eds. North-Holland, Amsterdam, 1992, pp.~167--197.

\bibitem{Eisworth2003}
{\sc Eisworth, T., Nyikos, P.~J., and Shelah, S.}
\newblock Gently killing {S}-spaces.
\newblock {\em Israel J. Math. 136\/} (2003), 189--220.

\bibitem{Fischer}
{\sc Fischer, A., and Tall, F.~D.}
\newblock A proof of {T}odorcevic's ``{PFA$(S)[S]$} implies {B}alogh's
  $\mathbf{\mathop{\pmb{\sum}}}$".
\newblock Preprint.

\bibitem{Fleissner1978}
{\sc Fleissner, W.~G.}
\newblock Separation properties in {M}oore spaces.
\newblock {\em Fund. Math. 98\/} (1978), 275--286.

\bibitem{F}
{\sc Fleissner, W.~G.}
\newblock The normal {M}oore space conjecture and large cardinals.
\newblock In {\em Handbook of set-theoretic topology}, K.~Kunen and J.~E.
  Vaughan, Eds. North-Holland, Amsterdam, 1984, pp.~733--760.

\bibitem{Fleissner1986}
{\sc Fleissner, W.~G.}
\newblock Left separated spaces with point-countable bases.
\newblock {\em Trans. Amer. Math. Soc. 294}, 2 (1986), 665--677.

\bibitem{Fuchino}
{\sc Fuchino, S., Juh\'asz, S., Soukup, L., Szentmikl\'ossy, Z., and Usuba, T.}
\newblock Fodor-type reflection principle, metrizability and
  meta-{L}indel\"ofness.
\newblock {\em Topology Appl. 157\/} (2010), 1415--1429.

\bibitem{Fuchinoa}
{\sc Fuchino, S., Soukup, L., Sakai, H., and Usuba, T.}
\newblock More about {F}odor-type reflection principle.
\newblock In preparation.

\bibitem{Gruenhage1996b}
{\sc Gruenhage, G., and Koszmider, P.}
\newblock The {A}rkhangel'ski{\u\i}-{T}all problem: a consistent
  counterexample.
\newblock {\em Fund. Math. 149\/} (1996), 143--166.

\bibitem{Gruenhage1996}
{\sc Gruenhage, G., and Koszmider, P.}
\newblock The {A}rkhangel'ski{\u\i}-{T}all problem under {M}artin's axiom.
\newblock {\em Fund. Math. 149}, 3 (1996), 275--285.

\bibitem{Konig2003}
{\sc K{\"o}nig, B.}
\newblock Local coherence.
\newblock {\em Ann. Pure Appl. Logic 124}, 1-3 (2003), 107--139.

\bibitem{LarsonErice}
{\sc Larson, P.}
\newblock Notes on {T}odorcevic's {E}rice lectures on forcing with a coherent
  {S}ouslin tree.
\newblock Preprint.

\bibitem{Larson1999}
{\sc Larson, P.}
\newblock An {$\mathbb{S}\sb {\rm max}$} variation for one {S}ouslin tree.
\newblock {\em J. Symbolic Logic 64}, 1 (1999), 81--98.

\bibitem{LarsonOTHP}
{\sc Larson, P., and Tall, F.~D.}
\newblock On the hereditary paracompactness of locally compact hereditarily
  normal spaces.
\newblock Submitted.

\bibitem{Larson}
{\sc Larson, P., and Tall, F.~D.}
\newblock Locally compact perfectly normal spaces may all be paracompact.
\newblock {\em Fund. Math. 210\/} (2010), 285--300.

\bibitem{Larson2001}
{\sc Larson, P., and Todorcevic, S.}
\newblock Chain conditions in maximal models.
\newblock {\em Fund. Math. 168\/} (2001), 77--104.

\bibitem{Larson2002}
{\sc Larson, P., and Todorcevic, S.}
\newblock Kat\v etov's problem.
\newblock {\em Trans. Amer. Math. Soc. 354}, 5 (2002), 1783--1791.

\bibitem{Miyamoto1993}
{\sc Miyamoto, T.}
\newblock {$\omega\sb 1$}-{S}ouslin trees under countable support iterations.
\newblock {\em Fund. Math. 142}, 3 (1993), 257--261.

\bibitem{Nyikos1992}
{\sc Nyikos, P.~J.}
\newblock Hereditary normality versus countable tightness in countably compact
  spaces. in \textit{{P}roceedings of the {S}ymposium on {G}eneral {T}opology
  and {A}pplications ({O}xford, 1989)}.
\newblock {\em Topology Appl. 44\/} (1992), 271--292.

\bibitem{Roitman1984}
{\sc Roitman, J.}
\newblock Basic {$S$} and {$L$}.
\newblock In {\em Handbook of {S}et-{T}heoretic {T}opology}. North-Holland,
  Amsterdam, 1984, pp.~295--326.

\bibitem{Shelah1977}
{\sc Shelah, S.}
\newblock Remarks on {$\lambda $}-collectionwise {H}ausdorff spaces.
\newblock In {\em Proceedings of the 1977 Topology Conference (Louisiana State
  Univ., Baton Rouge, La., 1977), II\/} (Topology Proc. 2 (1978) 583--592).

\bibitem{Soukup2001}
{\sc Soukup, L.}
\newblock Indestructible properties of {$S$}- and {$L$}-spaces.
\newblock {\em Topology Appl. 112}, 3 (2001), 245--257.

\bibitem{Tall}
{\sc Tall, F.~D.}
\newblock {PFA$(S)[S]$}: more mutually consistent topological consequences of
  {PFA} and {$V=L$}.
\newblock Canad. J. Math., to appear.

\bibitem{Tall1974a}
{\sc Tall, F.~D.}
\newblock The countable chain condition versus separability --- applications of
  {M}artin's axiom.
\newblock {\em General Topology Appl. 4\/} (1974), 315--339.

\bibitem{Tall1974}
{\sc Tall, F.~D.}
\newblock On the existence of normal metacompact {M}oore spaces which are not
  metrizable.
\newblock {\em Canad. J. Math. 26\/} (1974), 1--6.

\bibitem{Todorcevic}
{\sc Todorcevic, S.}
\newblock Chain conditions in topology, {II}.
\newblock In preparation.

\bibitem{Todorcevic2008}
{\sc Todorcevic, S.}
\newblock Handwritten slides from his lectures at \textit{{A}dvances in
  {S}et-{T}heoretic {T}opology, a {C}onference in {H}onor of {T}. {N}ogura},
  {E}rice, {I}taly.
\newblock 2008.

\bibitem{Todorcevic1987}
{\sc Todorcevic, S.}
\newblock Partitioning pairs of countable ordinals.
\newblock {\em Acta. Math. 159\/} (1987), 261--294.

\bibitem{T}
{\sc Todorcevic, S.}
\newblock Random set-mappings and separability of compacta.
\newblock {\em Topology Appl. 74\/} (1996), 265--274.

\bibitem{Watson1982}
{\sc Watson, W.~S.}
\newblock Locally compact normal spaces in the constructible universe.
\newblock {\em Canad. J. Math. 34}, 5 (1982), 1091--1096.

\bibitem{Watson1986}
{\sc Watson, W.~S.}
\newblock Locally compact normal metalindel\"of spaces may not be paracompact:
  an application of uniformization and {S}uslin lines.
\newblock {\em Proc. Amer. Math. Soc. 98\/} (1986), 676--680.

\end{thebibliography}

\noindent
{\rm Franklin D. Tall\\
Department of Mathematics\\
University of Toronto\\
Toronto, Ontario\\
M5S 2E4\\
CANADA\\}
\noindent
{\it e-mail address:} {\rm f.tall@utoronto.ca}

\end{document}